
\documentclass[12pt]{article}
\usepackage[left=2.5cm,right=2.5cm]{geometry}
\usepackage{graphicx}
\usepackage{epstopdf}
\usepackage{amsfonts}
\usepackage{amsmath}
\usepackage{mathrsfs}
\usepackage{amsthm}
\usepackage{subfigure}
 \usepackage{multicol,multicap}
 \usepackage{hyperref}\hypersetup{urlcolor=blue, citecolor=blue}

\newtheorem{theorem}{Theorem}[section]
\newtheorem{lemma}[theorem]{Lemma}

\newtheorem{remark}[theorem]{Remark}

\numberwithin{equation}{section}

\begin{document}


\markboth{Z. Shen $\&$ J. Wei}{Bifurcation Analysis in A Diffusive Mussel-Algae Model with Delay}

\title{Bifurcation analysis in  a diffusive mussel-algae model with delay}

\author{ZUOLIN SHEN\footnote{Email: mathust\_lin@foxmail.com} ~and JUNJIE WEI\footnote{Corresponding author. Email: weijj@hit.edu.cn.}\\
{\small Department of Mathematics,
Harbin Institute of Technology, \hfill{\ }}\\
{\small Harbin, Heilongjiang, 150001, P.R.China\hfill{\ }}\\
}

\maketitle

\begin{abstract}
In this paper, we consider the dynamics of a delayed reaction-diffusion mussel-algae system subject to Neumann boundary conditions. When the delay is zero, we show the existence of positive solutions and the global stability of the boundary equilibrium. When the delay is not zero, we obtain
the stability of the positive constant steady state and the existence of Hopf bifurcation by analyzing the distribution of characteristic values. By using the theory of normal form
and center manifold reduction for partial functional differential equations, we derive an algorithm that determines the direction of Hopf bifurcation and the stability of bifurcating periodic solutions. Finally, some numerical simulations are carried out to
support our theoretical results.
{\bf Keywords}: mussel-algae system; reaction-diffusion; global stability; Hopf bifurcation; delay.
\end{abstract}

\section{Introduction}

\noindent Two-component interactions coupled with dispersion and advection have been formulated for explaining pattern formation \cite{Shigesada-1981, Malchow-1996, Klausmeier-1999, Ainseba-2008}. The researchers' interests are the processes
of generating spatial complexity in ecosystems.
In particular, van de Koppel {\it et al.} [2005] studied the regular spatial patterns in young mussel beds on soft sediments in the Wadden Sea through a spatially explicit model describing changes in local population biomass
of algae and mussels.
The model considered the dispersal effect of the mussel and
the advection effect by the tidal current for the algae but ignored the dispersal effect for the latter.
The simulations have shown that the coupling between dispersion and advection can lead to spatial patterns.
A successful model deserves a further study, such as the implications of advection caused by tidal flow \cite{Sherratt-2016},
kinetic behavior of the patterned solutions \cite{WLS-2009}, interactions between different forms of self-organization \cite{Koppel-2008,Liu-2013, Liu-2014}.

In 2015, based on the field experiment consisting of a young mussel bed on a homogeneous substrate covered by a relatively quiescent layer of marine water in which advection was minimized as much as possible (for more details about the experiment, see \cite{Koppel-2008, Liu-2013}), Cangelosi {\it et al.} [2015] extended the dispersion-advection system in the case of replacing the advection term by a lateral diffusive one:
\begin{equation}\label{eq_1}
\begin{cases}
\cfrac{\partial M}{\partial t}&=D_{M}\Delta M +e c M A -d_{M}\cfrac{k_{M}}{k_{M}+M}M,  \\
\cfrac{\partial A}{\partial t}&=D_{A}\Delta A+(A_{up}-A)f-\cfrac{c}{H}MA,\\
\end{cases}
\end{equation}
where $M=M(x,t)$ is the mussel biomass density on the sediment, $A=A(x,t)$ is the algae concentration in the lower water layer overlying the mussel bed, $x\in \Omega$ is spatial variable, and $\Omega$ is a bounded domain in $\mathbb{R}^n$ with a smooth boundary $\partial{\Omega}$. Here, $e$ is
a conversion constant relating ingested algae to mussel biomass production, $c$ is the consumption constant,
$d_{M}$ is the maximal per capital mussel mortality rate, $k_{M}$ is the value of $M$ at which mortality is
half-maximal, $A_{up}$ describes the uniform concentration of algae in the upper reservoir water layer, $f$ is the rate of exchange between the lower and upper water layers, $H$ is the height of the lower water layer,
and $D_{M}$ and $D_{A}$ are the diffusion coefficients of the mussel and algae respectively.

We shall introduce the following dimensionless change of variables:
$$
\begin{array}{l}
m=\cfrac{M}{k_{_M}},~a=\cfrac{A}{A_{up}},~\omega=\cfrac{c k_{_M}}{H},~\hat{t}=d_{_M}t,~\alpha=\cfrac{f}{\omega},\\
r=\cfrac{e c A_{up}}{d_{_M}},~\gamma=\cfrac{d_{_M}}{\omega},~d=\cfrac{D_{_M}}{\gamma D_{_A}},~\hat{x}=x\sqrt{\cfrac{\omega}{D_{_A}}},\\
\end{array}
$$
then we have
\begin{subequations}\label{eq_ma}
	\begin{equation}\label{eq_ma_1}
	\begin{cases}
	\cfrac{\partial m}{\partial t}=d\Delta m+ rma -\cfrac{m}{1+m},\\
	\gamma  \cfrac{\partial a}{\partial t}=\Delta a+\alpha(1-a)-ma.
	\end{cases}
	\end{equation}
	For simplicity, we have removed the `~$\hat{}$~'.
	
	We point out that most studies of system \eqref{eq_ma_1} concentrate on the formation of patterns and numerical bifurcation, see for examples \cite{Koppel-2008, WLS-2009, Liu-2012, Sherratt-2013}.
	We shall investigate the periodic solutions bifurcated from the constant coexistence steady state. The dynamics near the bifurcation point can well explain the periodicity of population in predator-prey systems. For further mathematical analysis, we supplement system \eqref{eq_ma_1} with the following initial-boundary value conditions:
	\begin{equation}\label{eq_ma_2}
	\begin{array}{l}
	\partial_{\nu}m=\partial_{\nu}a=0, ~x\in \partial\Omega, ~t>0,\\
	m(x,0)=m_{0}(x)\geq 0,~a(x,0)=a_{0}(x)\geq 0, ~x\in\Omega.
	\end{array}
	\end{equation}
\end{subequations}

Time delay has been commonly used in modeling biological systems and can significantly change the dynamics of these systems \cite{Volterra-1928, Wangersky-1957, Dunkel-1968, Freedman-1992, Campbell-1999, SWH-2004, CSW-2013, XXF-2017}. In a predator-prey system, we assume that the prey will die soon after being captured by the predator, while the predator needs a certain period to convert the prey into its energy. Therefore, in the equation of prey, the functional response is not affected by the time delay, while in the equation of predator, the current number of predators dependents on the number of prey present at some previous time.
In this article, we consider the following delayed mussel-algae system:
\begin{equation}\label{eq_ma_tau}
\begin{cases}
\cfrac{\partial m(x,t)}{\partial t}=d\Delta m(x,t)+m(x,t)\left(ra(x,t-\tau) -\cfrac{1}{1+m(x,t-\tau)}\right), &x\in\Omega, ~t>0,\\
\gamma  \cfrac{\partial a(x,t)}{\partial t}=\Delta a(x,t)+\alpha(1-a(x,t))-m(x,t)a(x,t),&x\in\Omega, ~t>0,\\
\partial_{\nu}m=\partial_{\nu}a=0, &x\in\partial\Omega, ~t>0,\\
m(x,t)=m_{0}(x,t)\geq 0,~a(x,t)=a_{0}(x,t)\geq 0, &x\in\Omega, ~-\tau\leq t\leq 0,
\end{cases}
\end{equation}
where $\tau$ is the digestion period of mussel and the mortality of mussels depends on the state whether they have eaten in the past.
The homogeneous Neumann boundary condition implies that there is no population movement across the boundary $\partial\Omega$.

Define the real-value Sobolev space
$$ X:=\left\{(u,v)\in H^2(\Omega)\times H^2(\Omega)|\partial_{\nu}u=\partial_{\nu}v=0, x\in \partial\Omega\right\},
$$
and its complexification $X_{\mathbb{C}}:=X\oplus iX=\{x_1+i x_2|x_1,x_2 \in X\}$ with a complex-valued $L^2$ inner product $<\cdot,\cdot>$ which defined as
$$
<U_1,U_2>=\int_{\Omega}(\bar{u}_1u_2+\bar{v}_1v_2)dx,
$$
where $U_i=(u_i, v_i)^T \in X_{\mathbb{C}}, i=1,2$.

The system \eqref{eq_ma_tau} always has a non-negative constant solution $E_0(0,1)$, which is a boundary equilibrium corresponding to bare sediment where no mussels exist. Biologically, we would like to see the coexistence state corresponding to a positive equilibrium. In order for this to happen, we make the following assumption:
$$
\textsc{(H1)}~~~~~~\qquad~~~ 0<\alpha<1<r<\alpha^{-1}.~~~~~~~~
$$
Then the system has a unique constant positive equilibrium $E_*(m^*,a^*)$ with $m^*=\cfrac{\alpha (r-1)}{1-\alpha r}$ and $ a^*=\cfrac{1-\alpha r}{r(1-\alpha)}$.

The main work of this article is the proof of global existence and boundedness of solutions and
a detailed bifurcation analysis about the positive equilibrium.
In the stability analyses to follow, we first employ $r$ as a bifurcation parameter and consider the Hopf bifurcation of system \eqref{eq_ma} at the positive equilibrium. Then for functional differential system \eqref{eq_ma_tau}, we show the existence of Hopf bifurcation caused by time delay $\tau$. Moreover, we give the direction and stability of bifurcating periodic solutions.

The organization of the remaining part is as follows.
In Section 2, we prove the wellposedness (existence, uniqueness, and positivity) of
solutions to system \eqref{eq_ma}. We also show the linear stability analysis of the positive constant steady state in this section.
In Section 3, for system \eqref{eq_ma_tau}, we consider the existence of Hopf bifurcation with delay as the bifurcation parameter.
In Section 4, we give the direction of Hopf bifurcation and the stability of the bifurcating periodic solutions by applying the normal form method and center manifold theory for partial functional differential equations.
Section 5 is devoted to numerical simulations.
\section{Existence and linear stability analysis for model without delay}
In this section, we mainly focus on the analysis of model \eqref{eq_ma}. We first prove the existence and boundedness of the unique positive solution
by using the method of upper-lower solution and strong maximum principle. Then we show the global attractivity of boundary equilibrium.
Finally, we investigate the Hopf bifurcation induced by the rescaled capture rate $r$ and Turing bifurcation induced by the predator diffusion rate $d$.
\subsection{Existence and boundedness}
The global existence of the solutions for the initial value problem \eqref{eq_ma} is proved in this subsection.
\begin{theorem}\label{theorem-boundedness}
	Assume that $\alpha$, $\gamma$, $r$ and $d$ are all positive, the initial data $\left(m_0(x), a_0(x)\right)$ satisfies $m_{0}(x)\geq 0, a_{0}(x)\geq 0$, and $ m_{0}(x) \not\equiv 0, a_{0}(x) \not\equiv 0$ for $x\in\Omega$. Then
	\begin{enumerate}
		\item  The system \eqref{eq_ma} has a unique nonnegative solution $(m(x,t), a(x,t))$ satisfying
		$$
		0<m(x,t), \quad 0<a(x,t)\leq \max\left\{\|a_0\|_{\infty}, 1\right\}, \quad x\in \bar{\Omega}, t> 0,
		$$
		where $\|\phi\|_{\infty}=\sup_{x\in\overline{\Omega}} \phi(x)$.
		
		\item  If $0<r<1$ and $0<\alpha r<\cfrac{1}{2}$, then the first component $m(x,t)$ of the solutions of system \eqref{eq_ma} satisfies the following estimate
		$$
		\limsup_{t\rightarrow \infty} m(x,t)\leq  1,  \quad x\in \bar{\Omega}.
		$$
	\end{enumerate}
	
\end{theorem}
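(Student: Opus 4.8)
The plan is to establish the two conclusions separately, using classical parabolic comparison and the strong maximum principle throughout.

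For Part (1) I would first invoke standard analytic-semigroup theory for reaction–diffusion systems with smooth kinetics (the nonlinearities $rma$, $\tfrac{m}{1+m}$, $ma$ are smooth on $\{m\ge0\}$) to obtain a unique maximal classical solution $(m,a)$ on some interval $[0,T_{\max})$, depending continuously on the data. Positivity is then read off the structure of the equations. Writing the first equation as $m_t-d\Delta m=\big(ra-\tfrac1{1+m}\big)m$, the coefficient $ra-\tfrac1{1+m}$ is bounded on compact time intervals and $m\equiv0$ solves the associated linear homogeneous Neumann problem; since $m_0\ge0,\ m_0\not\equiv0$, the strong maximum principle forces $m(x,t)>0$ for $t>0$. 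For $a$, rewrite the second equation as $\gamma a_t-\Delta a+(\alpha+m)a=\alpha>0$; the strictly positive source together with $a_0\ge0,\ a_0\not\equiv0$ gives $a(x,t)>0$. The upper bound follows by exhibiting the constant $K:=\max\{\|a_0\|_\infty,1\}$ as a supersolution of the $a$-equation: one checks $\gamma\partial_tK-\Delta K-\alpha(1-K)+mK=\alpha(K-1)+mK\ge0$ using $K\ge1$ and $m\ge0$, while $\partial_\nu K=0$ and $K\ge a_0$, so comparison yields $a\le K$.

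Global existence ($T_{\max}=\infty$) then follows from an a priori $L^\infty$ bound: $a$ is already bounded by $K$, and feeding $a\le K$ into the $m$-equation gives $m_t-d\Delta m\le rKm$, so comparison with the spatially homogeneous ODE $\dot{\overline m}=rK\overline m$ shows $m(x,t)\le\|m_0\|_\infty e^{rKt}$, which is finite on every bounded time interval. As the solution cannot blow up in finite time, it extends to all $t>0$.

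For Part (2) the first step is to sharpen the bound on $a$. Since $m>0$, the $a$-equation obeys $\gamma a_t-\Delta a\le\alpha(1-a)$, so comparison with the solution of $\gamma\dot{\overline a}=\alpha(1-\overline a)$, $\overline a(0)=\|a_0\|_\infty$, gives $a(x,t)\le\overline a(t)\to1$, hence $\limsup_{t\to\infty}\max_{\bar\Omega}a\le1$. The main obstacle is the bound on $m$, and the structural difficulty is that the growth term $rma$ does not saturate in $m$: simply substituting the asymptotic bound $a\lesssim1$ reduces the $m$-equation to $m_t-d\Delta m\le m\big(r-\tfrac1{1+m}\big)$, whose comparison ODE has an unstable positive root and permits unbounded growth because $\tfrac1{1+m}\to0$. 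Thus \emph{no} pointwise upper bound on $a$ alone can control $m$; one must use the feedback that high mussel density depletes the algae. I would exploit the consumption balance obtained by integrating the $a$-equation: with $Q(t)=\int_\Omega a$ bounded, one gets $\int_0^T\!\!\int_\Omega ma\,dx\,dt\le\alpha|\Omega|T+\gamma Q(0)$, so the time-averaged total consumption is controlled by $\alpha|\Omega|$. Combining this with the integrated $m$-equation $\tfrac{d}{dt}\!\int_\Omega(m+r\gamma a)=r\alpha|\Omega|-r\alpha\!\int_\Omega a-\int_\Omega\tfrac{m}{1+m}$ and invoking $\alpha r<\tfrac12$ to keep the net production strictly below the saturated mortality level $|\Omega|$, I would first aim at a uniform bound on $\int_\Omega m$, then upgrade it to a uniform $L^\infty$ bound by parabolic smoothing ($L^p$–$L^q$ estimates for the semigroup), and finally combine this with the sharp bound $a\le1+\varepsilon$ in a last comparison to conclude $\limsup_{t\to\infty}m(x,t)\le1$.

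I expect the crux to be precisely this control of $m$: because the mussel kinetics lack self-limitation, the coupling between the two equations is indispensable, and the only delicate case is concentration of $m$ on small sets (where $\int_\Omega\tfrac{m}{1+m}$ stays small while $\int_\Omega m$ is large), which the damping in the functional cannot see without a regularity input. The hypotheses $r<1$ and $\alpha r<\tfrac12$ enter exactly here, guaranteeing that the algal supply is too scarce to sustain mussel densities above $1$.
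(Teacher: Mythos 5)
Your Part (1) is essentially correct, though it takes a different route from the paper: you use local semigroup theory, the strong maximum principle, the constant supersolution $\max\{\|a_0\|_\infty,1\}$ for $a$, and an exponential-in-time bound on $m$ to exclude finite-time blow-up, whereas the paper constructs the solution by Pao's method of coupled upper--lower solutions for mixed quasi-monotone systems, taking $(0,0)$ and the solution of the spatially homogeneous ODE system \eqref{eq_ode} as the lower and upper pair. Both are standard and deliver the same conclusions; your version avoids having to check the coupled upper--lower inequalities.

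Part (2) is where your proposal has a genuine gap, and you half-acknowledge it yourself. The identity $\frac{d}{dt}\int_\Omega(m+r\gamma a)=r\alpha|\Omega|-r\alpha\int_\Omega a-\int_\Omega\frac{m}{1+m}$ is correct but cannot by itself yield a uniform bound on $\int_\Omega m$: the dissipation term saturates at $|\Omega|$, so precisely the concentration scenario you describe defeats it, and you supply no regularity input that rules concentration out. Worse, even granting a uniform bound $m\le C$, your closing step (``combine with $a\le 1+\varepsilon$ in a last comparison'') fails for the reason you yourself state at the outset: the scalar comparison problem $m_t-d\Delta m\le m\bigl(r(1+\varepsilon)-\frac{1}{1+m}\bigr)$ has an unstable positive homogeneous equilibrium at $\frac{1}{r(1+\varepsilon)}-1$, so a comparison solution started at a large constant $C$ diverges instead of decaying below $1$. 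The obstruction you correctly identified in your second paragraph thus reappears, unresolved, at the end of your argument. The idea your plan is missing is the paper's: dominate the PDE solution by the solution $(m(t),a(t))$ of the full two-dimensional ODE system \eqref{eq_ode} (comparison for the coupled system, not for one scalar equation at a time with a frozen bound on $a$), and then prove $\limsup_{t\to\infty} m(t)\le 1$ by a phase-plane argument. There the coupling you rightly insist on is exploited pointwise and geometrically rather than through integrals: the hypothesis $\alpha r<\frac12$ makes $w=\frac1\gamma m+ra$ strictly decreasing as long as $m\ge 1$, forcing $m$ to drop below $1$; and the hypothesis $r<1$ places the $a$-nullcline strictly below the $m$-nullcline, so that the region $\{0<m<1,\ 0<a<\min\{1+\varepsilon_0,f^1(m)\}\}$ is invariant and attracting, since $\dot m<0$ below the $m$-nullcline and $\dot a<0$ on it. Without this reduction (or some substitute that genuinely uses the pointwise depletion of $a$ where $m$ is large), your argument does not close.
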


\begin{proof}
	Let $(m(t),~a(t))$ be the unique solution of the following ODE system
	\begin{equation}\label{eq_ode}
	\begin{cases}
	\cfrac{\text{d}m}{\text{d}t}=rma -\cfrac{m}{1+m},\\
	\gamma \cfrac{\text{d}a}{\text{d}t}=\alpha(1-a)-ma,  \\
	m_0=\sup_{x\in\overline{\Omega}}m_0(x), ~~a_0=\sup_{x\in\overline{\Omega}}a_0.
	\end{cases}
	\end{equation}
	
	Note that (2) is a mixed quasi-monotone system. Hence $(0,0)$ and $(m(t),a(t))$
	are the lower-solution and upper-solution of (2) respectively. From Theorem 3.3 (Chapter 8, page 400) in [Pao, 1992], we know that system (2) has a unique solution
	$(m(x,t),a(x,t))$ which satisfies
	$$
	0\leq m(x,t)\leq m(t),~~0\leq a(x,t)\leq a(t),~t\geq0.
	$$
	By the strong maximum principle for parabolic equations and the comparison principle, we can easily have that $0<m(x,t)$, $0<a(x,t)\leq \max
	\left\{\|a_0\|_{\infty}, 1\right\}$.
	
	To prove the boundedness of $m(x,t)$, we only need to prove that $m(t)$ is bounded since $m(t)$ is a upper-solution of $m(x,t)$, to show this, we first justify two claims.
	
	\textit{\textit{Claim} 1}: For any $T>0$, there exists a $t_1>T$ such that $m(t_1)<1$.
	If not, we assume that $m(t)\geq 1$ holds for all $t>T$. Let $w(t)=\frac{1}{\gamma}m(t)+ra(t)$, then we have
	$\frac{\text{d}w}{\text{d}t}
	\leq\frac{1}{\gamma}\left( r\alpha -\frac{1}{2}\right)<0
	$,
	which indicates $w(t)\rightarrow -\infty$ as $t\rightarrow \infty$ and this contradicts the definition of $w(t)$.
	It follows from part \textit{(1)} that for any $a_0>0$ and $\varepsilon_0>0$, there exists a $t_0>0$ such that $a(t)\leq 1+\varepsilon_0$ for $t\geq t_0$.
	Without loss of generality, we assume that $t_1>t_0$.
	
	\textit{\textit{Claim} 2}: There exists a $t_2>t_1$, such that $m(t)\leq 1$ for all $t>t_2$. To show this, let $a=f^1(m), a= f^2(m)$
	be the nullclines of $m$ and $a$ in the first quadrant, respectively. Then we have
	\begin{equation}\label{f^1-f^2}
	\begin{array}{ll}
	f^1(m)-f^2(m)=\cfrac{1}{r(1+m)}-\cfrac{\alpha}{\alpha+m}>0,
	\end{array}
	\end{equation}
	which implies that the $a-$nullcline is below the $m-$nullcline.
	On the other hand,
	for $t>0$ with $a(t)=f^1(m(t))$, we have
	\begin{equation}\label{da_dt}
	\begin{array}{ll}
	\gamma\cfrac{\text{d}a}{\text{d}t}
	=\alpha\left(1-a(t)\right)-m(t)a(t)
	=\cfrac{\alpha}{r}(r-1)<0.
	\end{array}
	\end{equation}

	Let $\phi_t$ be the trajectory of Eq.\eqref{eq_ode} with the initial value $\phi_0=(m_0,a_0)$ at $t=0$, and denote
	$$\begin{array}{ll}
	\Omega_1=\big\{(m,a): 0<m<1,~0<a<\min\{1+\varepsilon_0,f^1(m)\}\big\},\\
	\Omega_2=\big\{(m,a): 1<m,~0<a<\min\{1+\varepsilon_0,f^1(m)\}\big\},\\
	\Omega_3=\big\{(m,a): 0<m,~f^1(m)<a\big\}.
	\end{array}
	$$
	From \eqref{f^1-f^2}, \eqref{da_dt} and \textit{Claim 1}, we know that $\Omega_1$ is an invariant region. In addition, for any $\phi_0\in \Omega_1\cup\Omega_2$, we have $\phi_t\in \Omega_1$ for $t>t_1$.
	If $\phi_0\in \Omega_3$, then $\frac{\text{d}m}{\text{d}t}> 0, \frac{\text{d}a}{\text{d}t}< 0$. Moreover, there exists a $t'_2$ such that $\phi_t$ meets the $m$-nullcline at $t=t'_2$ and then enter the region $\Omega_1\cup\Omega_2$ (otherwise, $m(t)\rightarrow \infty$ as $t\rightarrow \infty$ which contradicts \textit{Claim 1}), and eventually reach the invariant region $\Omega_1$. See Fig.1 for geometric interpretations.
	
	This completes the proof.
\end{proof}

\begin{figure}[ht!]
\centering
\includegraphics[width=4.5in]{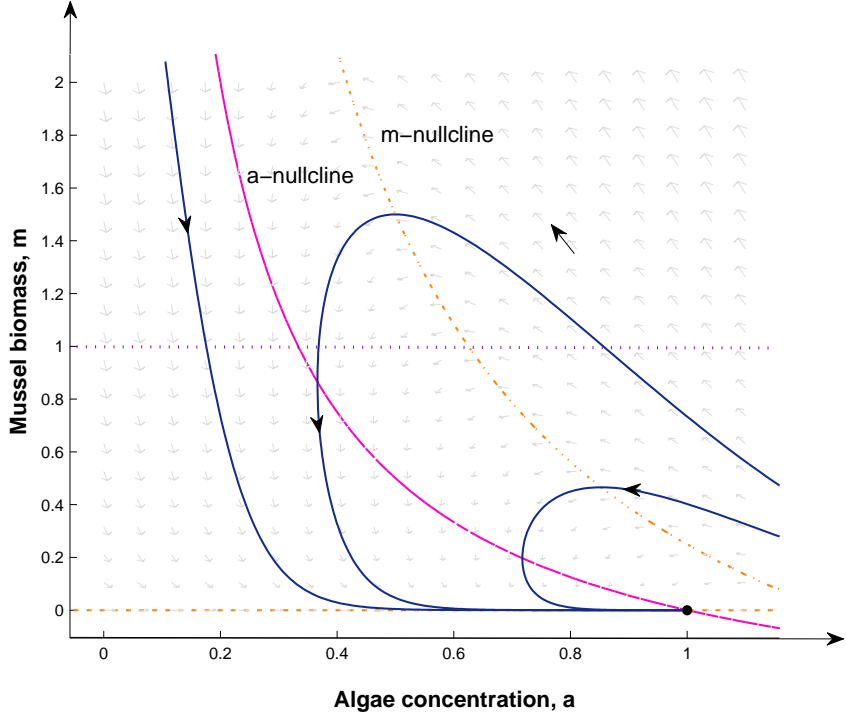}
\caption{Basic phase portrait of \eqref{eq_ode} with $0<r<1$ and $0<\alpha r<\frac{1}{2}$. The dashed-dotted curve is the
$m$-nullcline $a_{_f}= f^1(m)$, the dashed line is the $a$-nullcline $a_{_h}=f^2(m)$, the horizontal dot curve is $m=1$. The parameters used are given by $r=0.8, \alpha=0.5, \gamma=8$.}
\label{phase}       
\end{figure}

\subsection{Global stability of boundary equilibrium}
In this subsection, we shall prove the global stability of the boundary equilibrium $E_0(0,1)$ for the system \eqref{eq_ma} under some additional assumptions.
\begin{theorem}\label{boundary}
	Assume that $\alpha$, $\gamma$, $r$ and $d$ are all positive and the initial data $(m_0(x), a_0(x))$ satisfies the hypotheses of Theorem \ref{theorem-boundedness}. Then
	\begin{enumerate}
		\item If $0<r<1$, then $E_0(0,1)$ is locally asymptotically stable.
		\item If $r>1$, then $E_0(0,1)$ is unstable.
		\item If $0<r<\cfrac{1}{2}$ and $0<\alpha r<\cfrac{1}{2}$, then $E_0(0,1)$ is globally asymptotically stable.
	\end{enumerate}
\end{theorem}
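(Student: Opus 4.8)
The plan is to dispatch the local statements (1)--(2) by linearization and the global statement (3) by an a priori bound together with a comparison/squeezing argument. For (1) and (2), I would linearize \eqref{eq_ma_1} at $E_0(0,1)$; setting $m=u$, $a=1+v$, the reaction Jacobian is lower triangular,
$$
J=\begin{pmatrix} r-1 & 0\\ -1 & -\alpha\end{pmatrix}.
$$
Let $0=\mu_0<\mu_1\le\mu_2\le\cdots$ be the Neumann eigenvalues of $-\Delta$ on $\Omega$. Projecting the linearized problem onto the $k$-th eigenmode and exploiting that $J$ is triangular, the characteristic values split into two real families,
$$
\lambda_k^{(1)}=r-1-d\mu_k,\qquad \lambda_k^{(2)}=-\frac{\alpha+\mu_k}{\gamma}.
$$
Since $\lambda_k^{(2)}<0$ for all $k$, stability is decided by the first family, whose supremum $r-1$ occurs at $k=0$. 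Hence $0<r<1$ makes every characteristic value negative (local asymptotic stability), while $r>1$ gives $\lambda_0^{(1)}=r-1>0$ (instability).

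For (3) I would first secure the two limiting bounds. As $m\ge0$, the second equation gives $\gamma a_t\le\Delta a+\alpha(1-a)$, and comparison with the scalar flow $\gamma\dot{\bar a}=\alpha(1-\bar a)$, whose equilibrium $1$ is globally attracting, yields $\limsup_{t\to\infty}a\le1$. Since $0<r<\tfrac12<1$ and $0<\alpha r<\tfrac12$, Theorem~\ref{theorem-boundedness}(2) applies and gives $\limsup_{t\to\infty}m\le1$. Fixing a small $\varepsilon>0$, after some time one has $a\le1+\varepsilon$ and $m\le1+\varepsilon$ together, so $\tfrac{1}{1+m}\ge\tfrac{1}{2+\varepsilon}$ and the per-capita rate satisfies
$$
ra-\frac{1}{1+m}\le r(1+\varepsilon)-\frac{1}{2+\varepsilon}=:-\sigma<0,
$$
the sign being negative precisely because $r<\tfrac12$. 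Substituting into the first equation gives $m_t\le d\Delta m-\sigma m$, and comparison with the spatially constant supersolution $\bar m(t)=\|m(\cdot,t_\varepsilon)\|_\infty e^{-\sigma(t-t_\varepsilon)}$ forces $m(x,t)\to0$ uniformly.

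With $m\to0$ in hand, I would close the argument by squeezing $a$. Given $\eta>0$, for large $t$ one has $0\le ma\le\eta$, hence
$$
\Delta a+\alpha(1-a)-\eta\le\gamma a_t\le\Delta a+\alpha(1-a);
$$
comparison with the scalar flows whose equilibria are $1-\eta/\alpha$ and $1$ gives $1-\eta/\alpha\le\liminf a\le\limsup a\le1$, and letting $\eta\to0$ yields $a\to1$ uniformly. Combined with the local asymptotic stability from (1), this makes $E_0(0,1)$ globally asymptotically stable.

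I expect the main obstacle to lie entirely in part (3): one must invoke the parabolic comparison principle carefully for the scalar Neumann inequality on $m$ with a spatially homogeneous supersolution, and track $\varepsilon$ and $\eta$ cleanly through both estimates. The reason the hypotheses take the stated form is visible in the rate above---$r<\tfrac12$ is exactly what renders $ra-\tfrac{1}{1+m}$ negative once $\limsup m\le1$, while $\alpha r<\tfrac12$ is inherited from Theorem~\ref{theorem-boundedness} to guarantee that very bound.
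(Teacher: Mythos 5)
Your proof is correct, and for parts (1) and (2) it coincides with the paper's: the reaction Jacobian at $E_0(0,1)$ is lower triangular, so the characteristic equation factors into the two real families $\lambda=r-1-d\mu_k$ and $\lambda=-(\alpha+\mu_k)/\gamma$, and the sign of $r-1$ (the $k=0$ mode) decides stability --- this is exactly Eq.~\eqref{ce_0} with $\mu_k=n^2/l^2$, which the paper also notes is valid in arbitrary spatial dimension. For part (3) you take a genuinely different route. The paper constructs the Lyapunov functional $V(m,a)=\gamma r\int_\Omega\int_1^a\frac{\xi-1}{\xi}\,d\xi\,dx+\int_\Omega m\,dx$, shows $\dot V\le 0$ once $m\le 1$ (obtained, as in your argument, from Theorem~\ref{theorem-boundedness}(2), which is where $\alpha r<\frac12$ enters, while $r<\frac12$ forces $r-\frac{1}{1+m}<0$), and then invokes the LaSalle invariance principle, excluding the spurious rest point $\left(\frac1r-1,1\right)$ via $m\le1$ to conclude $\omega(\phi_0)=\{(0,1)\}$. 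You instead run a pure comparison/squeeze argument: the same two ingredients ($\limsup a\le1$ and $\limsup m\le1$) yield a strictly negative per-capita rate $-\sigma$, hence $m_t\le d\Delta m-\sigma m$ and uniform exponential decay of $m$, after which $a$ is squeezed to $1$. Your version is more elementary --- it uses only the scalar parabolic comparison principle rather than LaSalle (which in the PDE setting tacitly requires precompactness of orbits and identification of the maximal invariant set) --- and it delivers an explicit decay rate for $m$, at the cost of some $\varepsilon$--$\eta$ bookkeeping; the one housekeeping detail to add is that the subsolution in the final squeeze for $a$ should be started from $\min_x a(x,t_\eta)>0$, which Theorem~\ref{theorem-boundedness}(1) supplies. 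Both arguments then conclude global asymptotic stability by combining global attractivity with the local stability of part (1).
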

\begin{proof}
	The proof of \textit{(1)} and \textit{(2)}
	can be found in Section \ref{linear and Hopf} in which spatial domain $\Omega$ can work for arbitrary higher dimension.
	Next, we use the Lyapunov functional to prove global attractivity.
	
	Define
	$$ V(m,a)=\gamma r \int_\Omega \int_1^a\cfrac{\xi-1}{\xi}\text{d}\xi\text{d}x+\int_\Omega m\text{d}x.$$
	Then
	$$\begin{array}{ll}
	\dot{V}(m,a)&=\gamma r  \displaystyle \int_\Omega \cfrac{a-1}{a}a_t\text{d}x+ \displaystyle \int_\Omega m_t\text{d}x\\
	&=-r \displaystyle \int_\Omega \cfrac{1}{a^2}|\nabla a|^2\text{d}x-\alpha r  \displaystyle \int_\Omega \cfrac{(1-a)^2}{a}\text{d}x+ \displaystyle \int_\Omega\left(r-\cfrac{1}{1+m}\right)m\text{d}x.
	\end{array}$$
	From Theorem \ref{theorem-boundedness}, we have that $m(x,t)\leq 1$ when $t\geq t_2$ if $0<r<\cfrac{1}{2}$ and $0<\alpha r<\cfrac{1}{2}$. Clearly, $r-\cfrac{1}{1+m}\leq r-\cfrac{1}{2}< 0$ when $t\geq t_2$. Hence, we have $\dot{V}(m,a)\leq 0$ when $t\geq t_2$. Moreover, $\dot{V}(m,a)=0$ implies that $a=1$, and $m=0$ or $m=\cfrac{1}{r}-1$. From the LaSalle invariance principle, we have
	$$\omega(\phi_0)\subset \left\{(0,1),(\cfrac{1}{r}-1,1)\right\},$$
	where $\omega(x)$ is the $\omega-$limit set of $x$.
	Note that $\lim_{t\rightarrow \infty} m(x,t)\leq 1$, then
	$(\cfrac{1}{r}-1,1)\notin \omega(\phi_0)$. Hence, we have
	$$\omega(\phi_0)=\left\{(0,1)\right\},$$
	which is the desired result.
\end{proof}

\subsection{Linear stability and Hopf bifurcation}\label{linear and Hopf}
In this subsection, we shall investigate the linear stability and Hopf bifurcation of system \eqref{eq_ma}, and restrict the spatial domain $\Omega=(0,l\pi)$ of which the structure of the characteristic values is clear.

Denote $U=(m,a)^{^T}$, then the linearization of system \eqref{eq_ma} is
\begin{equation}\label{linearization}
\Gamma U=D\Delta U+ L_{_E}U,
\end{equation}
where $\Gamma$, $D$, $L_{_E}$ are defined as
$$\begin{array}{l}
\Gamma=\left(\begin{array}{cc}
1 & 0\\
0 & \gamma \end{array}\right)$$,\quad

D=\left(\begin{array}{cc}
	d & 0\\
	0 & 1\end{array}\right)$$,\quad

L_{_E}=\left(\begin{array}{cc}
	ra -1/(1+m)^2& rm\\
	-a & -(\alpha+m)\end{array}\right).
\end{array}$$

Then the characteristic equation of \eqref{linearization} at the equilibrium points $E_0(0,1)$ and $E_*(m^*,a^*)$ with Neumann boundary conditions can be obtained, and we first show the characteristic equation corresponding to $E_0(0,1)$, namely:
\begin{equation}\label{ce_0}
(\lambda+1-r+d \cfrac{n^2}{l^2})(\gamma \lambda +\alpha+\cfrac{n^2}{l^2})=0,
\end{equation}
where $n\in\mathbb{N}_0:=\mathbb{N}\cup\{0\}$. By straightforward calculations, we obtain the following results: $E_{0}$ is locally asymptotically stable when $0<r<1$, and unstable when $r>1$.
Our main concern is the dynamics of positive equilibrium $E_*(m^*,a^*)$,
the characteristic equation at $E_*(m^*,a^*)$ can be written as
\begin{equation}\label{ce_1}
\gamma \lambda^2+\widetilde{T}_{n}\lambda +\widetilde{D}_{n}=0,~n\in\mathbb{N}_0,
\end{equation}
where
$$\begin{array}{ll}
\widetilde{T}_{n}=(1+\gamma d)\cfrac{n^2}{l^2}+\cfrac{\alpha}{a^*}-\gamma r^2 a^{*2}m^*,\\
\widetilde{D}_{n}=d\cfrac{n^4}{l^4}+\left(\cfrac{d \alpha}{a^*}-r^2 a^{*2}m^*\right)\cfrac{n^2}{l^2}+\alpha r(r-1)a^*.
\end{array}$$
Then characteristic values $\lambda_{n}$ are given by
\begin{equation}\label{eigen_eq}
\lambda_{n}=\cfrac{-\widetilde{T}_{n}\pm \sqrt{\widetilde{T}_{n}^2-4\gamma \widetilde{D}_{n}}}{2 \gamma}.
\end{equation}

In the remaining part of this section, we choose $r$ as our bifurcation parameter and present some necessary conditions for the occurrence of Hopf bifurcation.

It is well known that if the system \eqref{eq_ma} undergoes a Hopf bifurcation at the critical value $r_{_H}$, there exists a neighborhood $\mathscr{N}(r_{_H})$ of $r_{_H}$ such that for any $r\in \mathscr{N}(r_{_H})$, the characteristic equation \eqref{ce_1} has a pair of simple, conjugate complex roots $\lambda(r)=\beta(r)\pm i\omega(r)$ which continuously differentiable in $r$ and satisfy $\beta(r_{_H})=0$, $\omega(r_{_H})>0$, $\beta'(r_{_H})\neq 0$, and all other roots have non-zero real parts. We shall identify the above conditions through the following form:
\begin{equation}\label{Tn_Dn}
\begin{array}{ll}
\widetilde{T}_{n}(r_{_H})=0,~~~\widetilde{D}_{n}(r_{_H})>0,~~~\beta^{'}(r_{_H})\neq 0,\\
\widetilde{T}_{j}(r_{_H})\neq 0,~~~\widetilde{D}_{j}(r_{_H})\neq 0,~~~j\neq n.
\end{array}
\end{equation}
Note that if $\textsc{(H1)}$ holds, then $\widetilde{D}_{0}=\alpha r(r-1)a^*>0$. The transversality is proved by the recent work in \cite{SJL-2017}, here we just state the following lemma without proof.

\begin{lemma}\label{trans}
Suppose that $\textsc{(H1)}$ holds. Let $r^*=\frac{1}{4}\alpha^{-1}\left(\alpha+\sqrt{\alpha^2+8\alpha}\right)$.
\begin{enumerate}
	\item  If $1<r_{_H}<r^*$, then $\beta'(r_{_H})>0.$
	\item  If $r^*<r_{_H}<\alpha^{-1}$, then $\beta'(r_{_H})<0.$
\end{enumerate}
\end{lemma}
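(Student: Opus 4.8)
The plan is to reduce the transversality statement first to the sign of a single derivative of $\widetilde{T}_{n}$ and then to the sign of an explicit quadratic in $r_{_H}$. Since $\textsc{(H1)}$ gives $\widetilde{D}_{n}(r_{_H})>0$ and $\widetilde{T}_{n}(r_{_H})=0$, the discriminant $\widetilde{T}_{n}^{2}-4\gamma\widetilde{D}_{n}$ is strictly negative at $r_{_H}$ and, by continuity, on a whole neighborhood $\mathscr{N}(r_{_H})$. Hence on $\mathscr{N}(r_{_H})$ the two roots of \eqref{ce_1} form a genuine complex-conjugate pair $\lambda(r)=\beta(r)\pm i\omega(r)$ with
\begin{equation*}
\beta(r)=-\frac{\widetilde{T}_{n}(r)}{2\gamma},\qquad \omega(r)=\frac{\sqrt{4\gamma\widetilde{D}_{n}(r)-\widetilde{T}_{n}(r)^{2}}}{2\gamma}.
\end{equation*}
As $\gamma$ is a fixed parameter, differentiating gives $\beta'(r_{_H})=-\widetilde{T}_{n}'(r_{_H})/(2\gamma)$, so $\beta'(r_{_H})\neq0$ is equivalent to $\widetilde{T}_{n}'(r_{_H})\neq0$, and the sign of $\beta'(r_{_H})$ is opposite to that of $\widetilde{T}_{n}'(r_{_H})$.

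Second, I would compute $\widetilde{T}_{n}'$ explicitly. Writing $P(r)=\alpha/a^{*}=\frac{\alpha r(1-\alpha)}{1-\alpha r}$ and $Q(r)=r^{2}a^{*2}m^{*}=\frac{\alpha(r-1)(1-\alpha r)}{(1-\alpha)^{2}}$, one has $\widetilde{T}_{n}(r)=(1+\gamma d)\frac{n^{2}}{l^{2}}+P(r)-\gamma Q(r)$. Because the mode term is independent of $r$, it follows that $\widetilde{T}_{n}'(r)=P'(r)-\gamma Q'(r)$ for every $n$, where a short computation gives $P'(r)=\frac{\alpha(1-\alpha)}{(1-\alpha r)^{2}}$ and $Q'(r)=\frac{\alpha}{(1-\alpha)^{2}}\big[(1+\alpha)-2\alpha r\big]$.

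Third, I would eliminate $\gamma$ using the defining relation $\widetilde{T}_{n}(r_{_H})=0$. For the spatially homogeneous mode $n=0$ this reads $\gamma=P(r_{_H})/Q(r_{_H})$, so that $\widetilde{T}_{0}'(r_{_H})=\big(P'Q-PQ'\big)/Q$ evaluated at $r_{_H}$. Since $Q(r_{_H})>0$ under $\textsc{(H1)}$, the sign of $\widetilde{T}_{0}'(r_{_H})$ is that of $P'Q-PQ'=Q^{2}\,(P/Q)'$, i.e. that of $(P/Q)'$. Using $P/Q=\frac{r(1-\alpha)^{3}}{(r-1)(1-\alpha r)^{2}}$, a direct differentiation — the step I expect to be the main obstacle, since one must factor the numerator rather than expand it, the key being the cancellation $h-rh'=(1-\alpha r)(2\alpha r^{2}-\alpha r-1)$ for $h=(r-1)(1-\alpha r)^{2}$ — yields
\begin{equation*}
\Big(\frac{P}{Q}\Big)'(r)=\frac{(1-\alpha)^{3}\,(2\alpha r^{2}-\alpha r-1)}{(r-1)^{2}(1-\alpha r)^{3}}.
\end{equation*}
Every factor except $2\alpha r^{2}-\alpha r-1$ is positive on $(1,\alpha^{-1})$, so the sign of $\widetilde{T}_{0}'(r_{_H})$ equals that of $2\alpha r_{_H}^{2}-\alpha r_{_H}-1$, and hence the sign of $\beta'(r_{_H})$ is the opposite one.

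Finally, the positive root of $2\alpha r^{2}-\alpha r-1=0$ is precisely $r^{*}=\frac{1}{4\alpha}\big(\alpha+\sqrt{\alpha^{2}+8\alpha}\big)$, so $2\alpha r^{2}-\alpha r-1<0$ for $1<r<r^{*}$ and $>0$ for $r^{*}<r<\alpha^{-1}$; this gives $\beta'(r_{_H})>0$ in case (1) and $\beta'(r_{_H})<0$ in case (2), as claimed. Besides the algebraic cancellation already flagged, the remaining delicate point is the bookkeeping for a general mode $n$: there $\widetilde{T}_{n}(r_{_H})=0$ forces $\gamma=\big(\tfrac{n^{2}}{l^{2}}+P\big)/\big(Q-d\tfrac{n^{2}}{l^{2}}\big)$, which adds a term $-\tfrac{n^{2}}{l^{2}}(dP'+Q')$ to the numerator $P'Q-PQ'$, and one must verify that this contribution does not overturn the sign on the relevant range — it vanishes exactly for the homogeneous mode, which is why the threshold is cleanly $r^{*}$.
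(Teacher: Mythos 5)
The paper itself gives no proof of this lemma --- it is stated explicitly ``without proof,'' with the transversality deferred to the reference [SJL-2017] --- so there is no internal argument to compare against; what can be said is whether your blind argument is sound. For the spatially homogeneous mode it is: the reduction $\beta'(r_{_H})=-\widetilde{T}_{n}'(r_{_H})/(2\gamma)$ is legitimate because $\widetilde{T}_n(r_{_H})=0$ and $\widetilde{D}_{n}(r_{_H})>0$ force the discriminant of \eqref{ce_1} to stay negative near $r_{_H}$; your formulas for $P$, $Q$, $P'$, $Q'$ are correct; the factorization $h-rh'=(1-\alpha r)(2\alpha r^{2}-\alpha r-1)$ checks out; and $2\alpha r^{2}-\alpha r-1$ is negative at $r=1$ and positive at $r=\alpha^{-1}$, so $r^{*}$ is its unique sign change on that interval, giving exactly the two cases of the lemma. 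The one loose end you flag --- the general mode $n\geq 1$ --- is a genuine one, and it cannot be repaired: since $r^{*}<\frac{1+\alpha}{2\alpha}$ (compare $\sqrt{\alpha^{2}+8\alpha}$ with $2+\alpha$), one has $Q'(r^{*})>0$, hence $dP'+Q'>0$ near $r^{*}$ and your correction term $-\frac{n^{2}}{l^{2}}(dP'+Q')$ is strictly negative there; taking $k=n^{2}/l^{2}$ close to $Q/d$ (which corresponds to large $\gamma$) keeps the numerator negative slightly to the right of $r^{*}$ while the denominator $Q-dk$ stays positive, so part (2) would fail for such a mode. The clean threshold $r^{*}$ is therefore specific to $n=0$, and the lemma should be read as pertaining to the homogeneous Hopf points defined by $\delta_{0}^{2}(r)=\rho_{0}(r)$ --- which is precisely how it is used in Theorem \ref{theorem_hopf}. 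With that reading, your proof is complete and supplies the argument the paper omits.
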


Note that the transversality can always be satisfied as long as $r_{_H}\neq r^*$. Hence, the determination of Hopf bifurcation points reduces to describe the set
$$S:=\left\{r\in(1,\alpha^{-1})\backslash \{r^*\}: \mbox{ for some } n\in\mathbb{N}_0,\eqref{Tn_Dn} \mbox{ is satisfied}\right\}.$$

Denote $\delta_{0}(r)=\cfrac{1-\alpha r}{1-\alpha}$,
$\rho_{0}(r)=\cfrac{r(1-\alpha)}{\gamma(r-1)}$.
The above analysis permits us to give the following stability results for system \eqref{eq_ma} without diffusion, the graphical results can be seen in Fig.\ref{hopf}.

\begin{theorem}\label{theorem_hopf}
Assume that $\textsc{(H1)}$ is satisfied. For system \eqref{eq_ma} without diffusion,
\begin{enumerate}
	\item  if $\delta_{0}^2(r)<\rho_{0}(r)$, the positive equilibrium $E_*(m^*,a^*)$ of system \eqref{eq_ma} is locally asymptotically stable;
	\item  if $\delta_{0}^2(r)>\rho_{0}(r)$, the positive equilibrium $E_*(m^*,a^*)$ of system \eqref{eq_ma} is unstable;
	\item  if $r_{_H}\in S$ satisfies the equation $\delta_{0}^2(r)=\rho_{0}(r)$, the system \eqref{eq_ma} undergoes a Hopf bifurcation at $r=r_{_H}$ which corresponds to spatially homogeneous periodic solution; the critical curve of Hopf bifurcation is defined by $\delta_{0}^2(r)=\rho_{0}(r)$.
\end{enumerate}
\end{theorem}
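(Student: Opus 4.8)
The plan is to reduce the stability question for the diffusionless system to the scalar characteristic equation carried by the constant mode $n=0$ in \eqref{ce_1}, namely $\gamma\lambda^2+\widetilde{T}_{0}\lambda+\widetilde{D}_{0}=0$, and then to read off the Routh--Hurwitz conditions. Under \textsc{(H1)} we already know $\widetilde{D}_{0}=\alpha r(r-1)a^*>0$, so the sign of the single coefficient $\widetilde{T}_{0}=\frac{\alpha}{a^*}-\gamma r^2 a^{*2}m^*$ decides everything: both roots have negative real part exactly when $\widetilde{T}_{0}>0$, at least one root has positive real part when $\widetilde{T}_{0}<0$, and the roots coalesce into a purely imaginary pair $\pm i\omega_0$ with $\omega_0=\sqrt{\widetilde{D}_{0}/\gamma}>0$ precisely when $\widetilde{T}_{0}=0$. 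Thus parts (1)--(3) all hinge on translating the sign of $\widetilde{T}_{0}$ into the stated comparison between $\delta_0^2(r)$ and $\rho_0(r)$.

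The key computation is to substitute $a^*=\frac{1-\alpha r}{r(1-\alpha)}$ and $m^*=\frac{\alpha(r-1)}{1-\alpha r}$ into $\widetilde{T}_0$ and factor out the positive constant $\alpha$, which gives
\begin{equation*}
\widetilde{T}_{0}=\alpha\left[\frac{r(1-\alpha)}{1-\alpha r}-\frac{\gamma(r-1)(1-\alpha r)}{(1-\alpha)^2}\right].
\end{equation*}
Under \textsc{(H1)} the quantities $1-\alpha r$, $r-1$ and $1-\alpha$ are all positive, so clearing denominators preserves the inequality direction and shows that $\widetilde{T}_{0}>0$ is equivalent to $r(1-\alpha)^3>\gamma(r-1)(1-\alpha r)^2$. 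The very same manipulation applied to $\delta_0^2(r)=\frac{(1-\alpha r)^2}{(1-\alpha)^2}$ and $\rho_0(r)=\frac{r(1-\alpha)}{\gamma(r-1)}$ shows that $\delta_0^2(r)<\rho_0(r)$ reduces to the identical inequality $r(1-\alpha)^3>\gamma(r-1)(1-\alpha r)^2$. Hence $\widetilde{T}_{0}>0\Leftrightarrow\delta_0^2<\rho_0$, and likewise with $<$ and $=$ in place of $>$. This immediately yields the asymptotic stability in (1) and the instability in (2).

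For the Hopf bifurcation in (3), the condition $\delta_0^2(r_{_H})=\rho_0(r_{_H})$ is exactly $\widetilde{T}_{0}(r_{_H})=0$, which together with $\widetilde{D}_{0}>0$ produces a simple pair of purely imaginary roots $\pm i\omega_0$ and leaves no other roots on the imaginary axis, since the $n=0$ equation is a quadratic with only two roots. The remaining requirement is the transversality $\beta'(r_{_H})\neq 0$; because the definition of $S$ excludes the value $r^*$, Lemma \ref{trans} guarantees $\beta'(r_{_H})\neq 0$ for every $r_{_H}\in S$. As the bifurcation is carried by the $n=0$ eigenmode, whose eigenfunction is the spatial constant, the emerging periodic orbit is spatially homogeneous, and the locus $\delta_0^2(r)=\rho_0(r)$ is the critical Hopf curve. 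I do not anticipate a genuine obstacle here: the argument is essentially the scalar Routh--Hurwitz criterion combined with one algebraic identity, and the only point demanding care is the bookkeeping of signs under \textsc{(H1)} when denominators are cleared, together with invoking the already-established transversality Lemma \ref{trans}.
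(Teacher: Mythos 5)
Your proposal is correct and follows essentially the same route the paper intends: the paper offers no explicit proof (it states the theorem "permits" from "the above analysis"), and that analysis is precisely your Routh--Hurwitz reading of the $n=0$ quadratic $\gamma\lambda^2+\widetilde{T}_0\lambda+\widetilde{D}_0=0$ with $\widetilde{D}_0>0$ under \textsc{(H1)}, the algebraic identity $\widetilde{T}_0>0\Leftrightarrow\delta_0^2(r)<\rho_0(r)$, and the transversality supplied by Lemma \ref{trans} for $r_{_H}\in S$. Your sign bookkeeping under \textsc{(H1)} checks out, so nothing further is needed.
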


\begin{figure}[ht!]
\centering
\subfigure[]{\includegraphics[width=3.2in]{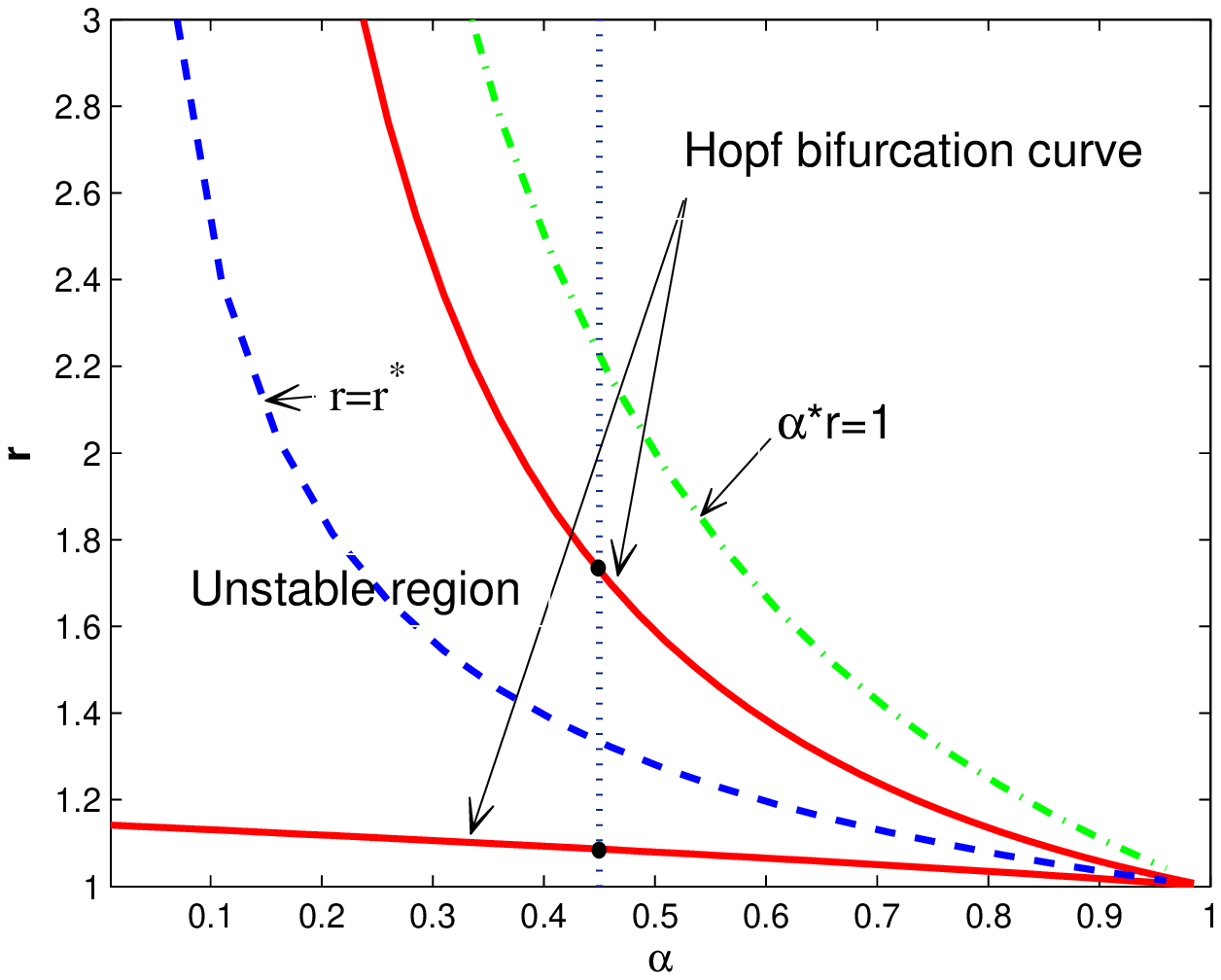}}
\subfigure[]{\includegraphics[width=3.2in]{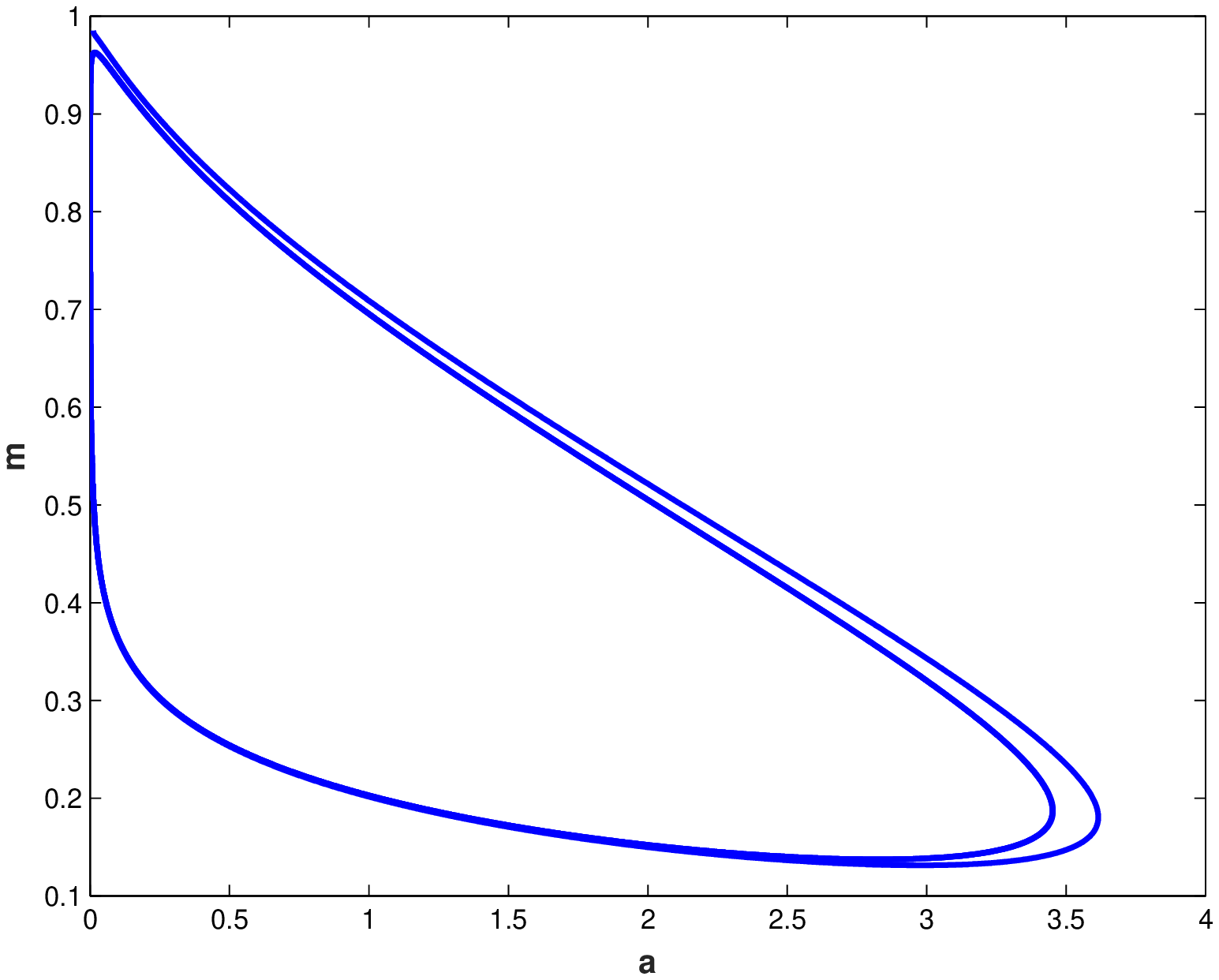}}
\caption{(a) The critical curve of Hopf bifurcation on $\alpha-r$ plane. The vertical dotted
curve is $\alpha=0.45$ and intersects with the Hopf bifurcation curve at two critical points with $r_1=1.0865, r_2=1.7286$ respectively. (b)The limit cycle bifurcated from the positive equilibrium when $r=1.2\in(r_1,r_2)$. The other parameters are chosen as: $\gamma=8, d=0.1$.}
\label{hopf}       
\end{figure}

\begin{remark}\label{E_0}
	When $\tau>0$, the characteristic equation corresponding to $E_0(0,1)$ for system \eqref{eq_ma_tau} has the same expression with Eq.\eqref{ce_0}. Therefore, $E_0(0,1)$ is locally asymptotically stable for any $\tau>0$ when $0<r<1$ (see Fig. \ref{fig3}).
\end{remark}
\begin{remark}
	For system \eqref{eq_ma}, the work of global existence of periodic solutions induced by Hopf bifurcation is still a spot worth studying
	. Our numerical results indicate that there exists at least one periodic solution when $r\in(r_1, r_2)$. Further simulations show that the periodic solution exists globally, see Fig. \ref{global}. The Hopf branch connects two critical points $H_1$ at $r_1$ and $H_2$ at $r_2$.
\end{remark}

\begin{figure}[ht!]
\centering
\subfigure[]{\includegraphics[width=3.2in]{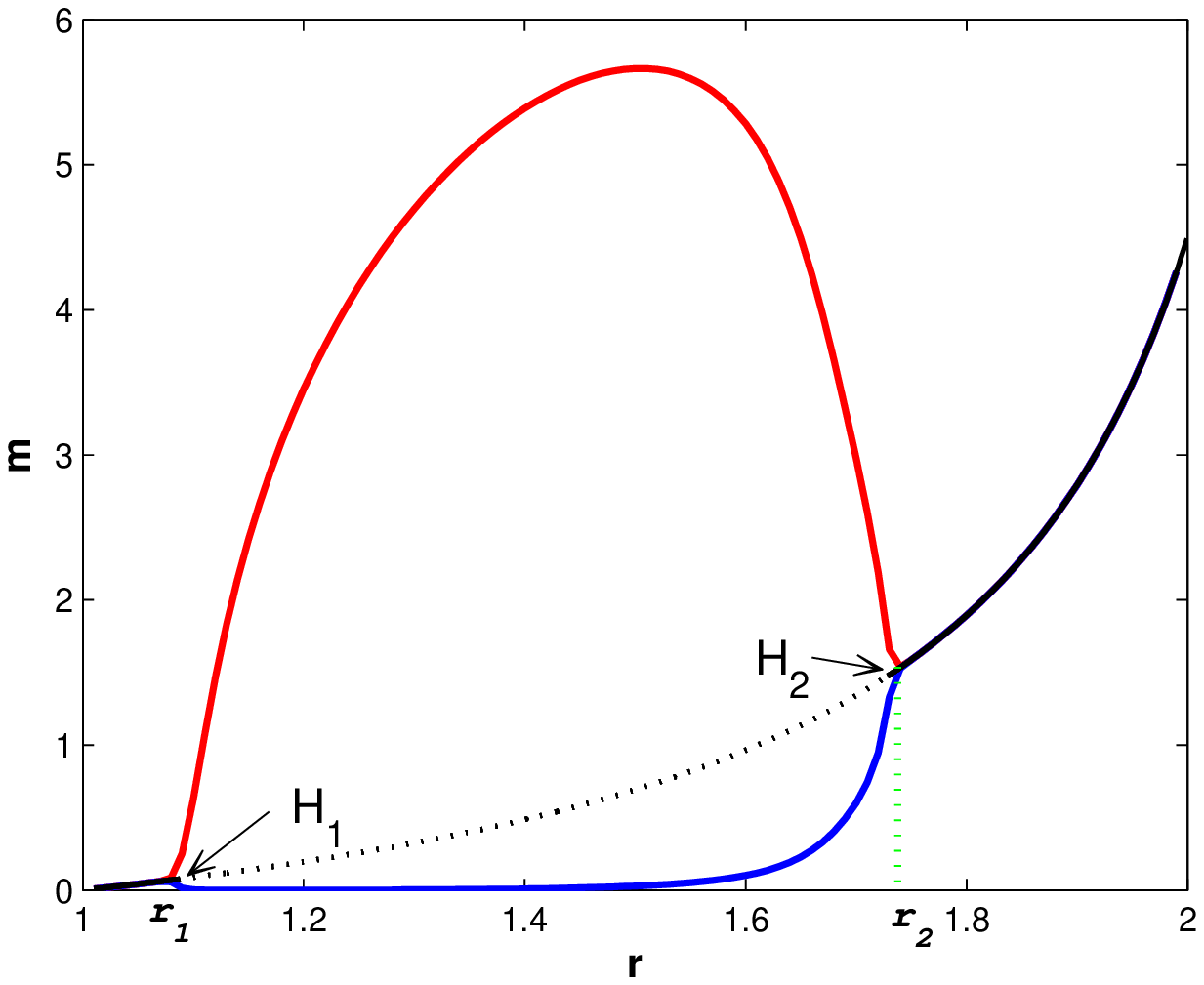}}
\subfigure[]{\includegraphics[width=3.2in]{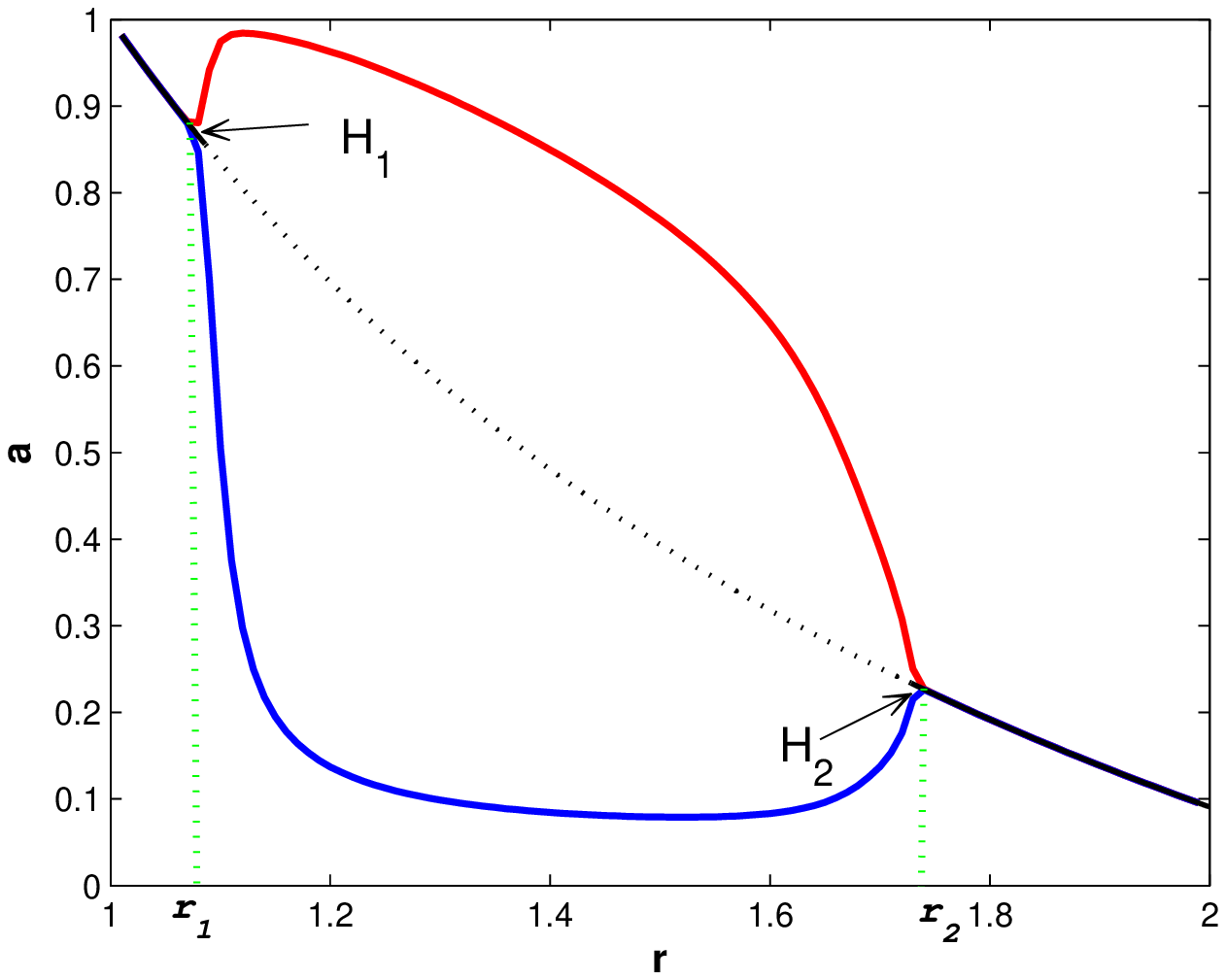}}
\caption{Simulations of global existence of periodic solutions for system \eqref{eq_ma}. Solid lines mark stable portions of the
branch. Black lines represent the homogeneous equilibrium, red lines and blue lines represent maximum
and minimum amplitude, respectively. (a) The amplitude of mussel biomass $m$. (b) The amplitude of algae concentration $a$.  The parameters are chosen as: $\gamma=8, \alpha=0.45, d=0.1$.}
\label{global}       
\end{figure}

\subsection{Turing instability}
In this subsection, we shall consider the Turing instability driven by diffusion. The non-equilibrium phase transition corresponding to the Turing bifurcation is the transformation from uniform steady state to spatial periodic oscillating state. Moreover, the system should be stable to homogeneous perturbations, and unstable to nonhomogeneous ones. To ensure that our stability analysis is valid, we make the following assumption:
$$
\textsc{(H2)}~~~~~~\qquad~~~ \delta_{0}^2(r)-\rho_{0}(r)<0.~~~~~~~~
$$
The following lemma is a trivial work.
\begin{lemma}\label{lemma_gamma_d}
	Let $g(r)=m^*(d\gamma\rho_{0}-\delta_{0}^2)$, $\Lambda=g^2(r)-4d\widetilde{D}_0$. Suppose that $\textsc{(H1)}$ and $\textsc{(H2)}$ holds. Then the positive equilibrium $E_*(m^*,a^*)$ of system \eqref{eq_ma} is locally asymptotically stable if either of
	$I_1$ or $I_2$ holds,
	where $I_1$, $I_2$ are given respectively by
	\begin{equation*}\begin{split}
	(I_1)~~ g(r)\geq0,~~~~
	(I_2)~~ g(r)<0 ~\text{and} ~\Lambda<0.
	\end{split}
	\end{equation*}
\end{lemma}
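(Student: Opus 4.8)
The plan is to apply the Routh--Hurwitz criterion to the quadratic characteristic equation \eqref{ce_1} mode by mode. Since $\gamma>0$, every root $\lambda_n$ of $\gamma\lambda^2+\widetilde{T}_n\lambda+\widetilde{D}_n=0$ has negative real part if and only if $\widetilde{T}_n>0$ and $\widetilde{D}_n>0$; hence $E_*$ is locally asymptotically stable precisely when both inequalities hold for every $n\in\mathbb{N}_0$. I would treat the trace and determinant conditions separately, and expect the whole argument to reduce to analyzing two elementary quadratics.

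For the trace, first note that $\widetilde{T}_n=(1+\gamma d)\frac{n^2}{l^2}+\widetilde{T}_0$, so it suffices to check $\widetilde{T}_0>0$. Using $ra^*=\delta_0$ and the explicit forms of $m^*,a^*$, a short computation factors $\widetilde{T}_0=\frac{\alpha}{1-\alpha r}\left[\,r(1-\alpha)-\gamma\delta_0^2(r-1)\,\right]$; under \textsc{(H1)} the prefactor $\frac{\alpha}{1-\alpha r}$ is positive, so $\widetilde{T}_0>0$ is equivalent to $\delta_0^2<\rho_0$, which is exactly \textsc{(H2)}. Thus \textsc{(H1)}--\textsc{(H2)} already force $\widetilde{T}_n>0$ for all $n$, and the trace condition never obstructs stability.

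The two cases $I_1,I_2$ enter only through the determinant. Writing $z=\frac{n^2}{l^2}\geq0$, I would regard $\widetilde{D}_n$ as the quadratic $h(z)=dz^2+Bz+\widetilde{D}_0$ with $B=\frac{d\alpha}{a^*}-r^2a^{*2}m^*$. The one non-routine step is the algebraic identity $B=g(r)$: from $r^2a^{*2}=\delta_0^2$ one gets $r^2a^{*2}m^*=\delta_0^2 m^*$, while $\rho_0=\frac{r(1-\alpha)}{\gamma(r-1)}$ together with the formulas for $m^*,a^*$ gives $\frac{d\alpha}{a^*}=d\gamma\rho_0 m^*$, so the linear coefficient collapses to $m^*(d\gamma\rho_0-\delta_0^2)=g(r)$. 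Under \textsc{(H1)} the constant term is $\widetilde{D}_0=\alpha r(r-1)a^*>0$ and the leading coefficient is $d>0$, so $h(z)=dz^2+g(r)z+\widetilde{D}_0$.

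With this normal form the two cases are immediate. In case $I_1$ ($g(r)\geq0$) all three coefficients are nonnegative and $h(0)=\widetilde{D}_0>0$, so $h(z)>0$ for every $z\geq0$. In case $I_2$ ($g(r)<0$ and $\Lambda<0$) the discriminant of $h$ as a quadratic in $z$ is exactly $\Lambda=g^2(r)-4d\widetilde{D}_0<0$, so $h$ has no real zeros; since $d>0$ this forces $h(z)>0$ for all real $z$, in particular for all admissible $z=n^2/l^2$. Either way $\widetilde{D}_n>0$ for every $n$, and combined with the trace bound the Routh--Hurwitz criterion delivers local asymptotic stability. The main (and only) obstacle is recognizing $g(r)$ and $\Lambda$ as the linear coefficient and the $z$-discriminant of $\widetilde{D}_n$; once the identity $B=g(r)$ is checked, everything else is bookkeeping, consistent with the paper's remark that this is ``a trivial work.''
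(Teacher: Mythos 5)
Your proof is correct and follows essentially the same route as the paper: the trace condition $\widetilde{T}_n>0$ is reduced to \textsc{(H2)} (the paper invokes part (1) of Theorem \ref{theorem_hopf} for this, you compute $\widetilde{T}_0$ directly), and the determinant condition is handled by viewing $\widetilde{D}_n$ as the quadratic $dz^2+g(r)z+\widetilde{D}_0$ in $z=n^2/l^2$, with $I_1$ giving monotone positivity and $I_2$ giving a negative discriminant $\Lambda$. The identity $\frac{d\alpha}{a^*}-r^2a^{*2}m^*=g(r)$ that you verify explicitly is exactly what the paper leaves implicit.
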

\begin{proof}
	The sign of $\widetilde{D}_n$ will be determined by the following arguments of $g(r)$ and $\Lambda$:\\
	$(i_1)$:~ ~If $g(r)\geq0$, then
	$$\widetilde{D}_n\geq \widetilde{D}_0>0 \quad  \mbox{for all} ~n\in\mathbb{N}_0;$$
	$(i_2)$:~ ~If $g(r)<0$, and $\Lambda<0$, then
	$$\widetilde{D}_n>0 \quad  \mbox{for all} ~n\in\mathbb{N}_0.$$
	
	Combined with the first case of Thoerem \ref{theorem_hopf}, the Lemma \ref{lemma_gamma_d} follows immediately.
\end{proof}

Lemma \ref{lemma_gamma_d} indicates there is no diffusion-driven Turing instability under $(I_1)$ or $(I_2)$, in this case, diffusion
does not change the stability of the positive equilibrium. Recall that $\widetilde{T}_{n}>\widetilde{T}_{0}>0$ for $n\in\mathbb{N}$, the positive equilibrium in the nonhomogeneous
case changes its stability only when $\widetilde{D}_{n}$ changes sign
from positive to negative. Then a requirement for the occurrence of a Turing instability is the satisfaction of the condition $\widetilde{D}_{n_{c}}=0$, and the critical wave number $k_c$ can be obtained from
\begin{equation}\label{eq_kc}
k^2_c:=\cfrac{n_{c}^2}{l^2}=\cfrac{1}{2d}\left(\cfrac{m^*}{(1+m^*)^2}-\cfrac{d \alpha}{a^*}\right),
\end{equation}
and the necessary condition of Turing instability can be derived by:

\begin{equation}\label{eq_turing}
\alpha d^2 r^2\nu_{0}+\alpha (r-1)^2\mathcal{R}^{-1}_{0} - 2d(r-1)(2-\alpha r)=0,
\end{equation}
where $\nu_{0}=\cfrac{(1-\alpha)^3}{(1-\alpha r)^2}$. Notice that \eqref{eq_turing} is independent of the wave number. The critical curves of Turing bifurcation defined by
formula \eqref{eq_turing} can be seen in Fig.\ref{fig-T}.

\begin{figure}[ht!]
\centering
  \includegraphics[width=5in]{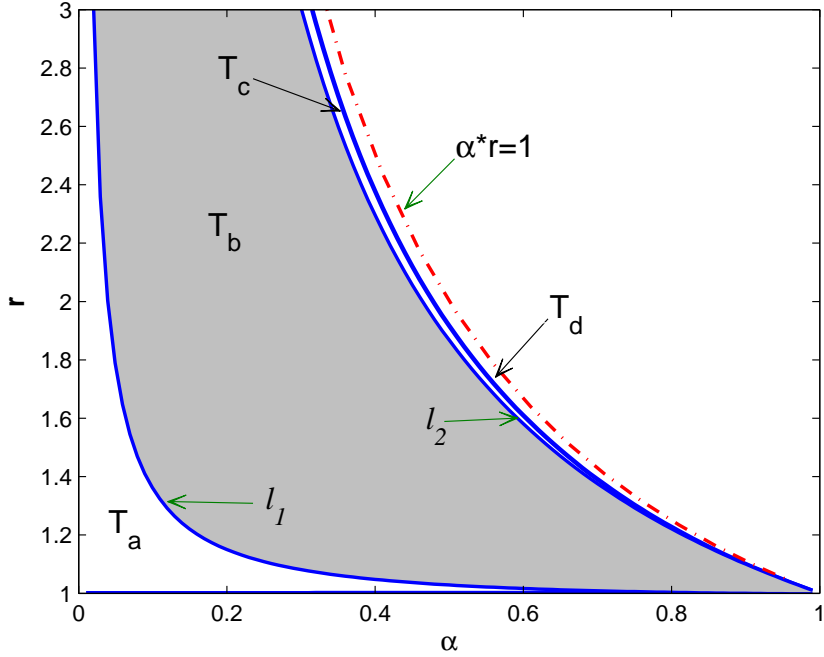}
\caption{The critical curve of Turing bifurcation in $\alpha-r$ plane with values of parameters are chosen as follows: $d=0.01$.}
\label{fig-T}       
\end{figure}

The critical bifurcation curves divide the $\alpha-r$ plane into four regions under the condition $\textsc{(H1)}$ in Fig.\ref{fig-T}. When $d=0.01$, $T_b$ is the only region where Turing instability occurs since $g(r)>0$ in region $T_d$, and $\Lambda<0$ in $T_a$ and $T_c$. In region $T_a$, all the characteristic values of equation \eqref{ce_1} have negative real part, that is, the constant steady state $E_*(m^*,a^*)$ is locally asymptotically stable. When the parameters vary across the curve $l_1$ into the region $T_b$, there is an eigenvalue that moves from the left half complex plane to the right through the origin, and a spatial periodic oscillating state appears from the constant steady state due to the Turing bifurcation. Similarly, when the parameters pass through $l_2$ into region $T_c$ or even $T_d$, the only positive eigenvalue move back to the left half complex plane, the spatial periodic oscillating state disappears, and $E_*(m^*,a^*)$ regains its stability.

\begin{remark}
	According to the research presented by \cite{Turing-1952}, the formation mechanism of Turing pattern is a nonlinear reaction-kinetics process coupled with a special type of diffusion process, and this process requires that the diffusion velocity of activator must be far less than that of the inhibitor in the system. Generally, the interaction-diffusion predator-prey model also follows such a mechanism. By calculating the Jacobian matrix at the equilibrium, we can identify the role of predator and prey in activator-inhibitor systems: in most cases, the prey serves as the ``activator", while predator serves as the ``inhibitor".
	However, under \textsc{(H2)}, according to lemma \ref{lemma_gamma_d}, if Turing instability occurs in model \eqref{eq_ma}, then $d\gamma<1$ must hold, that is, the diffusion velocity of predator must be far less than that of the prey; on the other hand, from the Jacobian matrix $J=(J_{ij})$ at the $E_*(m^*,a^*)$, we know that $J_{11}>0$, $J_{22}<0$, all those indicate that in our model the mussels (predator) play the role ``activator", while algae, the ``inhibitor".
\end{remark}

\section{The existence of Hopf bifurcation induced by delay}\label{stability}
To show the existence
of periodic solutions for system \eqref{eq_ma_tau} with $\tau\geq 0$, we consider the Hopf bifurcation, and we always assume that \textsc{(H1)} and \textsc{(H2)} are satisfied in the remaining part of this section.

Let the phase space $\mathscr{C}:=C([-\tau,0],X_{\mathbb{C}})$ with the sup norm. The linearization of system \eqref{eq_ma_tau} at $E_*(m^*,a^*)$ is given by
\begin{equation}\label{eq41}
\Gamma\dot{U}(t)=D\Delta U(t)+L_*(U_t),
\end{equation}
where $L_*:\mathscr{C}\to X_{\mathbb{C}}$ is defined as
$$
L_*(\phi)=L_1\phi(0)+L_2\phi(-\tau),
$$
with
$$\begin{array}{l}
L_1=\left( \begin{array}{cc}
0 ~&~ 0\\
-a^*  ~&~ -(\alpha+m^*)
\end{array}\right),~~
\quad L_2=\left( \begin{array}{cc}
\cfrac{m^*}{(1+m^*)^2} ~&~ r m^* \\
0 ~&~ 0
\end{array}\right).
\end{array}$$

The corresponding characteristic equations can be deduced by
\begin{equation}\label{c-eq2}
\gamma\lambda^2+T_n\lambda+(B\lambda+M_n)e^{-\lambda\tau}+D_n=0, ~~n\in\mathbb{N}_0,
\end{equation}
where
\begin{equation}\label{TDMB}
\begin{array}{l}
T_n=\alpha+m^*+(1+\gamma d)\cfrac{n^2}{l^2}, \quad  M_n=ra^*m^*(1-\alpha r-ra^*\cfrac{n^2}{l^2}),\\
D_n=d(\alpha+m^*+\cfrac{n^2}{l^2})\cfrac{n^2}{l^2},\quad B=-\gamma r^2{a^*}^2m^*.
\end{array}
\end{equation}
Moreover, $\lambda=0$ is not the root of Eq.\eqref{c-eq2} if the following assumption holds:
\begin{equation*}\textsc{(H3)}~~~~~~~
\begin{cases}
d\gamma\rho_{0}-\delta_{0}^2>0,\\
~\mbox{or}\\
d\gamma\rho_{0}-\delta_{0}^2<0, ~\mbox{and}~~(d\gamma\rho_{0}-\delta_{0}^2)^2-\cfrac{4d\widetilde{D}_0}{m^{*2}}<0.
\end{cases}
\end{equation*}
According to the results in \cite{Ruan-2003}, as parameter $\tau$ varies, the sum
of the orders of the zeros of \eqref{c-eq2} in the open right half plane can change
only if a pair of conjugate complex roots appear on or cross the imaginary axis.
Now we would like to seek critical values of $\tau$ such that there exists a pair of simple purely imaginary eigenvalues. Let $\pm i\omega(\omega>0)$ be
solutions of the $(n+1)$th equation of \eqref{c-eq2}, then
$$-\gamma\omega^2+i T_n \omega+(i\omega B+M_n)e^{-i\omega\tau}+D_n=0.$$
Separating the real and imaginary parts, it follows that
\begin{equation}\label{re-im}
\begin{cases}
M_n\cos\omega\tau +\omega B\sin\omega\tau= \gamma\omega^2-D_n,\\
M_n\sin\omega\tau -\omega B\cos\omega\tau =T_n\omega . \\
\end{cases}
\end{equation}
Then we have
\begin{equation}\label{omega_2}
\gamma^2 z^2+\mathscr{T}_nz+D_n^2-M^2_n=0,
\end{equation}
where $z=\omega^2$ and $\mathscr{T}_n=T_n^2-2\gamma D_n-B^2>0$ is automatically satisfied due to the assumption $\textsc{(H2)}$ .
Hence, if $D_n^2-M_n^2<0$, Eq.\eqref{omega_2} has a unique positive root given by
\begin{equation}\label{z_n}
z_n=\cfrac{1}{2\gamma^2}\left(-\mathscr{T}_n+\sqrt{\mathscr{T}_n^2-4\gamma^2(D_n^2-M_n^2)}\right),
\end{equation}
where
\begin{equation*}\begin{array}{l}
D_n-M_n=d\cfrac{n^4}{l^4}+(d\cfrac{\alpha}{a^*}+r^2 a^{*2}m^*)\cfrac{n^2}{l^2}-\alpha r(r-1)a^*
\to\infty,~\text{as}~n\to\infty,
\end{array}
\end{equation*}
with $D_0-M_0=-ar(r-1)a^*<0$.
Therefore, there exists an integer $N_3\in\mathbb{N}$ such that
\begin{equation*}
D_n-M_n\begin{cases}
<0,~~\text{for}~~0\leq n< N_3,\\
\geq 0,~~\text{for}~~n\geq N_3.
\end{cases}
\end{equation*}

Denote
$$\begin{array}{l}
S_0=\{n\in\mathbb{N}_0|~\mbox{Eq.}\eqref{omega_2} \mbox{ has positive roots under \textsc{(H1)} $\sim$ \textsc{(H3)}}\}.\\
\end{array}$$
Through the analysis above, we know that if $n\in S_0$, then Eq.\eqref{c-eq2} has purely imaginary roots as long as $\tau$
takes the critical values determined by \eqref{re-im}, and those values can be formulated explicitly by
\begin{equation}\label{tau_1}
\tau_{n,j}=\begin{cases}
\cfrac{1}{\omega_n}\left(\arccos\cfrac{(\gamma M_n-BT_n )\omega_n^{2}-M_nD_n}{M_n^2+\omega_n^2B^2}+2j\pi\right),&\sin\omega\tau >0 \\
\cfrac{1}{\omega_n}\left(-\arccos\cfrac{(\gamma M_n-BT_n )\omega_n^{2}-M_nD_n}{M_n^2+\omega_n^2B^2}+2(j+1)\pi\right), &\sin\omega\tau <0
\end{cases},~~j\in\mathbb{N}_0,
\end{equation}
where $\omega_n=\sqrt{z_n}$.

Following the work of \cite{Cooke-1982}, we have
\begin{lemma}\label{trans_tau}
	Suppose that \textsc{(H1)}$\sim$\textsc{(H3)} are satisfied.
	Then
	$$
	\beta'(\tau_{n,j})>0,~~for~j\in\mathbb{N}_0,~n\in S_0,
	$$
	where $\beta(\tau)= ~\textrm{Re} \lambda(\tau)$.
\end{lemma}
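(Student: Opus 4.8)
The plan is to carry out the classical implicit-differentiation argument of Cooke and Grossman on the transcendental equation \eqref{c-eq2}. I write the $(n+1)$th characteristic equation as $P(\lambda)+Q(\lambda)e^{-\lambda\tau}=0$, where $P(\lambda)=\gamma\lambda^2+T_n\lambda+D_n$ and $Q(\lambda)=B\lambda+M_n$. Along the branch of roots with $\lambda(\tau_{n,j})=i\omega_n$, I regard $\lambda$ as a differentiable function of $\tau$. Since $\beta'(\tau)=\text{Re}\,\frac{d\lambda}{d\tau}$ and, for any nonzero complex number $w$, $\text{Re}(w^{-1})=\text{Re}(w)/|w|^2$ has the same sign as $\text{Re}(w)$, it suffices to show that $\text{Re}\left(\frac{d\lambda}{d\tau}\right)^{-1}>0$ at $\lambda=i\omega_n$. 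This inversion is precisely what makes the computation tractable.

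Differentiating the equation implicitly and eliminating the exponential through $Q(\lambda)e^{-\lambda\tau}=-P(\lambda)$ yields
\begin{equation*}
\left(\frac{d\lambda}{d\tau}\right)^{-1}=-\frac{P'(\lambda)}{\lambda P(\lambda)}+\frac{Q'(\lambda)}{\lambda Q(\lambda)}-\frac{\tau}{\lambda}.
\end{equation*}
At $\lambda=i\omega_n$ the last term is purely imaginary and contributes nothing to the real part. Substituting $P(i\omega_n)=(D_n-\gamma\omega_n^2)+iT_n\omega_n$, $P'(i\omega_n)=T_n+2\gamma i\omega_n$, $Q(i\omega_n)=M_n+iB\omega_n$, $Q'(i\omega_n)=B$, and rationalizing each fraction, I expect the real part to reduce to
\begin{equation*}
\text{Re}\left(\frac{d\lambda}{d\tau}\right)^{-1}\bigg|_{\lambda=i\omega_n}=\frac{T_n^2-2\gamma D_n+2\gamma^2\omega_n^2}{|P(i\omega_n)|^2}-\frac{B^2}{|Q(i\omega_n)|^2}.
\end{equation*}

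The decisive simplification is that $i\omega_n$ being a root forces $|P(i\omega_n)|^2=|Q(i\omega_n)|^2$; this modulus equality is exactly the relation from which \eqref{omega_2} was derived. Denoting this common positive value by $\Delta$, the two fractions combine into a single one whose numerator is $T_n^2-2\gamma D_n-B^2+2\gamma^2\omega_n^2=\mathscr{T}_n+2\gamma^2 z_n$, with $z_n=\omega_n^2$. Writing $G(z)=\gamma^2 z^2+\mathscr{T}_n z+D_n^2-M_n^2$, so that $G'(z)=2\gamma^2 z+\mathscr{T}_n$ and $z_n$ is a root of $G$, this is precisely $G'(z_n)/\Delta$. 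Hence the sign of $\beta'(\tau_{n,j})$ equals the sign of $G'(z_n)$.

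It then remains only to observe that $G'(z_n)>0$. For $n\in S_0$ the existence of a positive root of \eqref{omega_2} requires $D_n^2-M_n^2<0$, so the upward-opening parabola $G$ has exactly one positive zero, namely the $+$ branch recorded in \eqref{z_n}. Evaluating directly, $G'(z_n)=2\gamma^2 z_n+\mathscr{T}_n=\sqrt{\mathscr{T}_n^2-4\gamma^2(D_n^2-M_n^2)}>0$, the radicand being positive precisely because $D_n^2-M_n^2<0$. Therefore $\text{Re}\left(\frac{d\lambda}{d\tau}\right)^{-1}>0$ and $\beta'(\tau_{n,j})>0$ for all $j\in\mathbb{N}_0$ and $n\in S_0$. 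I expect the only genuine obstacle to be the bookkeeping in the real-part computation of the second paragraph; once the modulus identity collapses the two denominators, the answer is manifestly the derivative of $G$ at its larger root, which is positive by construction.
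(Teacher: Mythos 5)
Your proposal is correct and follows essentially the same route as the paper: implicitly differentiate the characteristic equation, compute $\text{Re}\,(d\lambda/d\tau)^{-1}$ at $\lambda=i\omega_n$, invoke the modulus identity $|P(i\omega_n)|=|Q(i\omega_n)|$ to merge the two fractions, and recognize the resulting numerator as $\mathscr{T}_n+2\gamma^2 z_n=\sqrt{\mathscr{T}_n^2-4\gamma^2(D_n^2-M_n^2)}>0$. Your identification of this quantity as $G'(z_n)$ at the unique positive root is a clean way of packaging the paper's final line, but the argument is the same.
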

\begin{proof}
	Substituting $\lambda(\tau)$ into Eq.\eqref{c-eq2} and taking the derivative with respect to $\tau$ on both side, we obtain that
	\begin{equation*}
	\begin{array}{ll}
	\text{Re}\left(\cfrac{\text{d}\lambda}{\text{d}\tau}\right)^{-1}\Big|_{\tau =\tau_{n,j}}
	&=\text{Re}\left[\cfrac{2\gamma\lambda+T_n+Be^{-\lambda\tau}-\tau(B
		\lambda+M_n)e^{-\lambda\tau}}{\lambda(B\lambda+M_n)e^{-\lambda\tau}}\right]_{\tau =\tau_{n,j}}\\
	&=\text{Re}\left[\cfrac{(2\gamma\lambda+T_n)e^{\lambda\tau}}
	{\lambda(B\lambda+M_n)}+\cfrac{B}{\lambda(B\lambda+M_n)}\right]_{\tau =\tau_{n,j}}\\
	&=\cfrac{2\gamma^2\omega_n^2-2\gamma D_n+T^2_n}{(\gamma\omega_n^2-D_n)^2+\omega_n^2 T_n}+\cfrac{-B^2}{B^2\omega_n^2+M_n^2}\\
	&=\cfrac{\sqrt{(T_n^2-2\gamma D_n-B^2)^2-4\gamma^2(D_n^2-M_n^2)}}{B^2\omega_n^2+M_n^2}.
	\end{array}
	\end{equation*}
	Since the sign of $\text{Re}\left(\cfrac{\text{d}\lambda}{\text{d}\tau}\right)$ is same as that of $\text{Re}\left(\cfrac{\text{d}\lambda}{\text{d}\tau}\right)^{-1}$, the lemma follows immediately.
\end{proof}

From \eqref{tau_1}, for a fixed $n\in S_0$, we have that
$$
\tau_{n,j}\leq\tau_{n,j+1},~j\in\mathbb{N}_0.
$$

Let $\tau^*=\min_{n\in S_0}\{\tau_{n,0}\}$ be the smallest critical value. Summarizing the above analysis, we have the following lemma.
\begin{lemma}\label{lemma_tau}
	Assume that \textsc{(H1)}$\sim$\textsc{(H3)} are satisfied. Then
	the $(n+1)$th equation of \eqref{c-eq2} has a pair of simple pure imaginary roots
	$\pm i\omega_n$, and all the other roots have non-zero real parts when $\tau=\tau_{n,j},~j\in\mathbb{N}_0,~n\in S_0$. Moreover, all the roots of Eq.\eqref{c-eq2} have negative real parts for
	$\tau\in[0,\tau^*)$, and for $\tau>\tau^*$ , Eq.\eqref{c-eq2}
	has at least one pair of conjugate complex roots with positive real parts.
\end{lemma}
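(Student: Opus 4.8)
The plan is to reduce the statement to a root-counting argument driven by the transversality already secured in Lemma \ref{trans_tau}, after first pinning down the geometry of the imaginary-axis crossings. I organize the work into three parts: simplicity of the purely imaginary roots at each $\tau_{n,j}$, stability on $[0,\tau^*)$, and instability for $\tau>\tau^*$.

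Write $P_n(\lambda,\tau)$ for the left-hand side of \eqref{c-eq2}. Since \eqref{omega_2} is quadratic in $z=\omega^2$ with $\mathscr T_n>0$ and, when $D_n^2-M_n^2<0$, has exactly one positive root $z_n$ as in \eqref{z_n}, the only purely imaginary candidates for the $(n+1)$th equation are $\pm i\omega_n$ with $\omega_n=\sqrt{z_n}$, while \textsc{(H3)} excludes $\lambda=0$. To see that $i\omega_n$ is simple I would check $\partial_\lambda P_n(i\omega_n,\tau_{n,j})\neq0$; this is immediate from the computation in Lemma \ref{trans_tau}, since $\partial_\lambda P_n=\lambda(B\lambda+M_n)e^{-\lambda\tau}\,(\mathrm d\lambda/\mathrm d\tau)^{-1}$ and each factor is nonzero at $\lambda=i\omega_n$: indeed $\lambda\neq0$, the quantity $B\lambda+M_n=M_n+iB\omega_n$ has nonzero imaginary part because $B\omega_n\neq0$, and $\mathrm{Re}(\mathrm d\lambda/\mathrm d\tau)^{-1}>0$ by Lemma \ref{trans_tau}. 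That no root of any other mode lies on the imaginary axis at $\tau_{n,j}$ follows once the frequencies $\omega_m=\sqrt{z_m}$ are seen to be mutually distinct, so that the critical values of distinct modes do not collide at $\tau_{n,j}$.

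For stability on $[0,\tau^*)$ I start at $\tau=0$, where \eqref{c-eq2} collapses to the no-delay equation \eqref{ce_1}; one checks directly that $T_n+B=\widetilde T_n$ and $D_n+M_n=\widetilde D_n$ (using $\alpha+m^*=\alpha/a^*$ and $ra^*m^*(1-\alpha r)=\alpha r(r-1)a^*$). Assumption \textsc{(H3)} is precisely the hypothesis $I_1$ or $I_2$ of Lemma \ref{lemma_gamma_d}, so every root has negative real part at $\tau=0$. Because the roots vary continuously with $\tau$ and \textsc{(H3)} forbids $\lambda=0$, a root can enter the closed right half-plane only by passing through $\pm i\omega$ with $\omega>0$, that is, only at some $\tau_{n,j}$, $n\in S_0$. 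As $\tau^*=\min_{n\in S_0}\{\tau_{n,0}\}$ is the least such value, no crossing has taken place for $\tau\in[0,\tau^*)$, and all roots retain negative real parts there.

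For $\tau>\tau^*$, Lemma \ref{trans_tau} gives $\beta'(\tau_{n,j})>0$ at \emph{every} critical value, so each crossing is transversal and from left to right; consequently the number of roots (with multiplicity) in the open right half-plane is nondecreasing in $\tau$ and jumps up by two as $\tau$ passes $\tau^*$. Hence for every $\tau>\tau^*$ there is at least one root with positive real part, and since the crossing at $\tau^*$ occurs at the nonzero frequency $\pm i\omega_{n_0}$ (with $n_0$ realizing the minimum), the destabilizing roots form a genuine conjugate pair near $\tau^*$. The step I expect to be most delicate is the ``cleanness'' of the crossings in the first part: one must rule out that a different mode $m\neq n$ contributes an extra imaginary-axis root exactly at $\tau_{n,j}$, which rests on the mutual distinctness of the $\omega_m$; this should follow from the strict monotonicity in $m$ of the coefficients $D_m-M_m$ and $\mathscr T_m$ entering \eqref{omega_2}, but it must be verified so that the crossings are simple and isolated.
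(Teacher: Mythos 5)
Your proposal is correct in substance and follows essentially the same route as the paper, which in fact gives no explicit proof of this lemma: it simply ``summarizes'' the preceding analysis, namely (i) the reduction at $\tau=0$ to Eq.~\eqref{ce_1} (where \textsc{(H2)}--\textsc{(H3)} place all roots in the open left half-plane via Lemma \ref{lemma_gamma_d}), (ii) the exclusion of $\lambda=0$ by \textsc{(H3)}, (iii) the Ruan--Wei root-counting principle, and (iv) the transversality of Lemma \ref{trans_tau}; your three parts reproduce exactly this, and your verification that $T_n+B=\widetilde T_n$ and $D_n+M_n=\widetilde D_n$ supplies a detail the paper leaves implicit. One caution on the step you yourself flag as delicate: mutual distinctness of the frequencies $\omega_m$ is not the right condition, since two modes with different frequencies can still produce coinciding critical delays $\tau_{n,j}=\tau_{m,k}$, which is what would actually place an extra imaginary-axis root at $\tau_{n,j}$; neither your monotonicity argument nor the paper rules this out, and the clean statement really requires a genericity assumption that the critical values $\{\tau_{n,j}\}_{n\in S_0,\,j\in\mathbb{N}_0}$ are pairwise distinct (the conclusions about $\tau\in[0,\tau^*)$ and $\tau>\tau^*$ are unaffected). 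Also, your simplicity argument is mildly circular as phrased, since the formula for $(\mathrm d\lambda/\mathrm d\tau)^{-1}$ presupposes $\partial_\lambda P_n\neq0$; it is cleaner to note that the quantity computed in Lemma \ref{trans_tau} is literally $\mathrm{Re}\bigl[\partial_\lambda P_n/(\lambda(B\lambda+M_n)e^{-\lambda\tau})\bigr]$ at $\lambda=i\omega_n$, and its positivity forces $\partial_\lambda P_n\neq0$.
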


Lemma \ref{trans_tau} and Lemma \ref{lemma_tau} lead to the following theorem.
\begin{theorem}\label{Hopf_stability}
	Assume that \textsc{(H1)}$\sim$\textsc{(H3)} are satisfied.
	Then system \eqref{eq_ma_tau} undergoes a Hopf bifurcation at the equilibrium $E_*(m^*,a^*)$
	when $\tau=\tau_{n,j}$, for $j\in\mathbb{N}_0,~n\in S_0$. Furthermore, the positive equilibrium $E_*(m^*,a^*)$ of system \eqref{eq_ma_tau} is asymptotically stable for $\tau\in[0,\tau^*)$, and unstable for $\tau>\tau^*$.
\end{theorem}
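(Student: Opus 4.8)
The plan is to deduce Theorem~\ref{Hopf_stability} almost entirely from the two preceding lemmas, treating it as an application of the Hopf bifurcation theorem for partial functional differential equations. The statement splits into two assertions: the occurrence of a Hopf bifurcation at each critical value $\tau=\tau_{n,j}$, and the stability (respectively instability) of $E_*(m^*,a^*)$ for $\tau$ below (respectively above) the threshold $\tau^*$. I would establish the stability statement first, since it follows immediately, and then verify the hypotheses needed for the bifurcation.

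For the stability claim I would simply quote Lemma~\ref{lemma_tau}, which asserts that all roots of the characteristic equation \eqref{c-eq2} have negative real parts for $\tau\in[0,\tau^*)$. Because the solution semigroup generated by the linearization \eqref{eq41} on $\mathscr{C}$ is eventually compact, its spectrum away from the origin is pure point and the exponential growth bound is governed by the spectral bound; hence the negativity of all characteristic roots yields the local asymptotic stability of $E_*$ on that interval. For $\tau>\tau^*$ the same lemma supplies at least one pair of conjugate roots with positive real part, which destabilizes $E_*$. The transversality result $\beta'(\tau_{n,j})>0$ from Lemma~\ref{trans_tau} reinforces the picture: each crossing of the imaginary axis is from left to right as $\tau$ increases, so once a root has crossed at $\tau=\tau^*$ it cannot return, and instability persists for all larger $\tau$.

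For the bifurcation claim I would check the three standard hypotheses of the Hopf bifurcation theorem in the infinite-dimensional setting. Lemma~\ref{lemma_tau} guarantees that at $\tau=\tau_{n,j}$ the $(n+1)$th factor of \eqref{c-eq2} has a pair of simple purely imaginary roots $\pm i\omega_n$ while every other root lies off the imaginary axis; this furnishes both the eigenvalue condition and the nonresonance condition. Lemma~\ref{trans_tau} provides the transversality condition $\beta'(\tau_{n,j})\neq 0$. With these in hand, the abstract Hopf bifurcation theorem produces a one-parameter family of small-amplitude periodic solutions emanating from $E_*$ at each $\tau=\tau_{n,j}$, with period near $2\pi/\omega_n$, which completes the proof.

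The main obstacle is not computational, since the transversality and root-distribution analyses are already carried out in Lemmas~\ref{trans_tau} and~\ref{lemma_tau}; it is rather the careful verification that the abstract hypotheses genuinely hold in the Banach space $\mathscr{C}=C([-\tau,0],X_{\mathbb{C}})$. In particular I would need to confirm that $\pm i\omega_n$ is simple as an eigenvalue of the infinitesimal generator of the solution semigroup, and not merely as a root of the scalar equation \eqref{c-eq2}, and that no integer multiple $i k\omega_n$ with $k\geq 2$ is simultaneously an eigenvalue, so that the normal-form and center-manifold reduction of Section~4 is legitimate. This is precisely where the decomposition of $\mathscr{C}$ according to the spatial eigenfunctions $\cos(nx/l)$ and the associated one-dimensional characteristic factors must be invoked, reducing the infinite-dimensional spectral problem to the scalar analysis already completed.
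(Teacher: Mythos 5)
Your proposal is correct and follows essentially the same route as the paper, which simply observes that Lemma~\ref{trans_tau} (transversality) and Lemma~\ref{lemma_tau} (distribution of the roots of \eqref{c-eq2}) together with the standard Hopf bifurcation theorem for partial functional differential equations yield the theorem. Your additional remarks on simplicity of $\pm i\omega_n$ as eigenvalues of the generator and on the reduction via the spatial eigenfunctions $\cos(nx/l)$ only make explicit what the paper leaves implicit.
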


\section{Direction of Hopf bifurcation and stability of bifurcating periodic solution}

From the discussion in Section \ref{stability}, the system \eqref{eq_ma_tau} undergoes a Hopf bifurcation at $E_*(m^*,a^*)$ when $\tau=\tau^*$. We will study the direction of Hopf bifurcation
and stability of the bifurcating periodic solutions by using the normal form method and center manifold theory (see \cite{Hassard-1981, Wu-1996, Faria-2000} for more details).

Let $\tilde{m}(x,t)=m(x,t)-m^*$, $\tilde{a}(x,t)=a(x,t)-a^*$, $t\mapsto t/\tau$, and drop the tildes for convenience of notation, then we have
\begin{equation}\label{eq_origin}
\begin{cases}
\cfrac{\partial m}{\partial t}=\tau[d\Delta m+r^2a^{*2}m^* m_t(-1)+r m^*a_t(-1)+f_1(m_t,a_t)], & x\in\Omega,~t>0,\\
\gamma \cfrac{\partial a}{\partial t}=\tau[\Delta a-a^*m-(\alpha+m^*)a+f_2(m_t,a_t)], & x\in\Omega,~t>0,\\
\cfrac{\partial m}{\partial \nu}=0,~\cfrac{\partial a}{\partial \nu}=0, & x\in\partial \Omega,~t>0,\\
m(x,t)=m_0(x,t)-m^*,~ a(x,t)=a_0(x,t)-a^*, & x\in\Omega,-1\leq t\leq 0,
\end{cases}
\end{equation}
where
$$
m_t(\theta)=m(x,t+\theta),~a_t(\theta)=a(x,t+\theta),~~\theta\in [-1,0],
$$
and
\begin{equation}\label{f1}
\begin{array}{ll}
f_1(\phi_1,\phi_2)=&r\phi_1(0)\phi_2(-1)-\cfrac{m^*}{(1+m^*)^3}\phi_1^2(-1)+\cfrac{1}{(1+m^*)^2}\phi_1(0)\phi_1(-1)\\
&+\cfrac{m^*}{(1+m^*)^4}\phi_1^3(-1)-\cfrac{1}{(1+m^*)^3}\phi_1(0)\phi_1^2(-1)+\emph{O}(4),\\
f_2(\phi_1,\phi_2)=&-\phi_1(0)\phi_2(0),
\end{array}
\end{equation}
where  $\phi_1, \phi_2\in \mathcal{C}:=C([-1,0],X_{\mathbb{C}})$.

Let $\tau=\tau^*+\epsilon$. Then the system \eqref{eq_origin} undergoes a
Hopf bifurcation at the equilibrium $(0,0)$ when $\epsilon=0$. Hence we can rewrite system \eqref{eq_origin} in an abstract form in the
space $\mathcal{C}$ as
\begin{equation}\label{eq_abs_origin}
\dot{U}(t)=\tilde{D}\Delta U(t)+L_\epsilon(U_t)+F(\epsilon,U_t),
\end{equation}
where $\widetilde{D}=(\tau^*+\epsilon)\Gamma^{-1}D$, and $
L_\epsilon:\mathcal{C}\to X_{\mathbb{C}},~F:\mathcal{C}\to X_{\mathbb{C}}$
are defined by
$$
L_\epsilon(\phi)=(\tau^*+\epsilon)\Gamma^{-1}L_1\phi(0)+(\tau^*+\epsilon)\Gamma^{-1}L_2\phi(-1),
$$
$$
F(\epsilon,\phi)=\Gamma^{-1}(F_1(\epsilon,\phi),~F_2(\epsilon,\phi))^T,
$$
with
$$
(F_1(\epsilon,\phi),~F_2(\epsilon,\phi))=(\tau^*+\epsilon)(f_1(\phi_1,\phi_2),~f_2(\phi_1,\phi_2)),
$$
where $f_1$ and $f_2$ are defined by \eqref{f1}.

The linearized equation of \eqref{eq_abs_origin} at the origin $(0,0)$ is in the following form
\begin{equation}\label{eq55}
\dot{U}(t)=\widetilde{D}\Delta U(t)+L_\epsilon(U_t).
\end{equation}
According to the theory of semigroup of linear operator \cite{Pazy-1983}, we know that the solution operator of \eqref{eq55} is a $C_0$-semigroup, and
the infinitesimal generator $A_{\epsilon}$ is given by
\begin{equation}\label{A}
A_{\epsilon}\phi=\begin{cases}
\dot{\phi}(\theta),  &\theta\in[-1,0), \\
\widetilde{D}\Delta\phi(0)+L_{\epsilon}(\phi), \qquad &\theta=0,
\end{cases}
\end{equation}
with
$$
\text{dom}(A_\epsilon):=\{\phi\in\mathcal{C}: \dot{\phi}\in\mathcal{C}, \phi(0)\in\text{dom}(\Delta),
\dot{\phi}(0)=\widetilde{D}\Delta\phi(0)+L_{\epsilon}(\phi)\}.
$$

In order to study the dynamics near the Hopf bifurcation, we need to extend the domain of solution operator to a space  of some discontinuous. Let
$$
\mathcal{BC}:=\left\{\phi:[-1,0] \rightarrow X_{\mathbb{C}}\big|~ \phi ~\text{is continuous on} [-1,0), \lim_{\theta\rightarrow 0^{-}}\phi(\theta)\in X_{\mathbb{C}} ~ \text{exists}\right\}.
$$
Hence, equation \eqref{eq_origin} can be rewritten as the abstract ODE in $\mathcal{BC}$
\begin{equation}\label{abstract ODE}
\dot{U}_t=A_\epsilon U_t+X_0 F(\epsilon,U_t),
\end{equation}
where
\begin{equation*}
X_0(\theta)=\begin{cases}
0 , &\theta\in[-1, 0),\\
I, \quad &\theta=0.
\end{cases}
\end{equation*}

Let
$$
b_n=\cfrac{\cos (nx/l)}{\|\cos(nx/l)\|},~ ~\beta_n=\{\beta_n^1, \beta_n^2\}=\{(b_n, 0)^{T}, (0, b_n)^{T}\},
$$
where
$$
\|\cos(nx/l)\|=\left(\int_0^{l\pi}\cos^2(nx/l)\text{d}x\right)^{\frac{1}{2}}.
$$

For $\phi=(\phi^{^{(1)}},\phi^{^{(2)}})^{T}\in\mathcal{C}$, denote
$$
\phi_n=\langle \phi,\beta_n\rangle=\left(\langle \phi,\beta_n^1\rangle, \langle \phi,\beta^2_n\rangle\right)^{T},
$$
and define $A_{\epsilon, n}$ as
\begin{equation}\label{An}
A_{\epsilon, n}(\phi_n(\theta)b_n)=\begin{cases}
\dot{\phi}_n(\theta)b_n,& \theta\in[-1,0), \\
\int_{-1}^{0}\text{d}\eta_n(\epsilon,\theta)\phi_n(\theta)b_n ,\qquad &\theta=0,
\end{cases}
\end{equation}
where
$$
\int_{-1}^{0}\text{d}\eta_n(\epsilon,\theta)\phi_n(\theta)=-\cfrac{n^2}{l^2} \widetilde{D}\phi_n(0)+L_{\epsilon,n}(\phi_n),
$$
with
$$
L_{\epsilon, n}(\phi_n)=(\tau^*+\epsilon)\Gamma^{-1}L_1\phi_n(0)+(\tau^*+\epsilon)\Gamma^{-1}L_2\phi_n(-1),
$$
and
\begin{equation*}
\eta_n(\epsilon,\theta)=\begin{cases}\begin{array}{ll}
-(\tau^*+\epsilon)\Gamma^{-1}L_2, & \theta=-1,\\
0, & \theta\in(-1,0),\\
(\tau^*+\epsilon)\Gamma^{-1}L_1-\cfrac{n^2}{l^2}\widetilde{D}, & \theta=0.
\end{array}\end{cases}
\end{equation*}

Now, we introduce the bilinear form $(\cdot,\cdot)$ on $\mathcal{C}^*\times\mathcal{C}$
\begin{equation}\label{bilinear form}
(\psi,\phi)=\sum_{k,j=0}^\infty(\psi_k,\phi_j)_c\int_\Omega b_kb_j\text{d}x,
\end{equation}
where
$$
\psi=\sum_{n=0}^\infty \psi_n b_n\in\mathcal{C}^*,~\phi=\sum_{n=0}^\infty \phi_n b_n\in\mathcal{C},
$$
and
$$
\phi_n\in C:=C([-1,0],\mathbb{R}^2),~~\psi_n\in C^*:=C([0,1],\mathbb{R}^2).
$$
Notice that
$$
\int_\Omega b_kb_j\text{d}x=0~~\mbox{for}~~k\neq j,
$$
then we have
\begin{equation*}
(\psi,\phi)=\sum_{n=0}^\infty(\psi_n,\phi_n)_c|b_n|^2,
\end{equation*}
where $(\cdot,\cdot)_c$ is the bilinear form defined on $C^*\times C$ with the form
\begin{equation}\label{bilinear form_2}
(\psi_n,\phi_n)_c=\psi_n(0)\phi_n(0)-\int_{-1}^0\int_{\xi=0}^\theta\psi_n(\xi-\theta)
\text{d}\eta_n(0,\theta)\phi_n(\xi)\text{d}\xi.
\end{equation}
Let $A^*$ be the adjoint operator of $A_0$ on $\mathcal{C}^*:=C([0,1],X_{\mathbb{C}})$ under the bilinear form \eqref{bilinear form}. Then
\begin{equation*}
A^*\psi(s)b_n=\begin{cases}\begin{array}{ll}
-\dot{\psi}(s)b_n,~& s\in(0,1],\\
\sum_{n=0}^\infty\int_{-1}^0\text{d}\eta_n^T(0,\theta)\psi_n(-\theta)b_n,~&s=0.
\end{array}\end{cases}
\end{equation*}
Let
$$
q(\theta)b_{n_0}=q(0)e^{i\omega_{n_0}\tau^*\theta}b_{n_0},
~q^*(s)b_{n_0}=q^*(0)e^{-i\omega_{n_0}\tau^* s}b_{n_0}
$$
be the eigenfunctions of $A_0$ and $A^*$ corresponding to the eigenvalues $i\omega_{n_0}\tau^*$.
By direct calculations, we have
$$
q(0)=(1, q_1)^{T},~q^*(0)=M(q_2, 1),
$$
where
\begin{equation*}\begin{array}{ll}
q_1=-\cfrac{a^*}{i\gamma\omega_{n_0}+\alpha+m^*+n_0^2/l^2},~~q_2=\cfrac{i\gamma\omega_{n_0}+\alpha+m^*+n_0^2/l^2}{rm^*e^{-i \omega_{n_0}\tau^*}},\\
M=\cfrac{e^{i\omega_{n_0}\tau^*}}{(q_1+q_2)e^{i\omega_{n_0}\tau^*}+\tau^*q_2(r^2a^{*2}m^*+q_1rm^*)}.
\end{array}\end{equation*}
Then we decompose the space $\mathcal{C}$ as follows
$$
\mathcal{C}=P\oplus Q,
$$
where
$$
P=\{zqb_{n_0}+\overline{z}\overline{q}b_{n_0}|z\in\mathbb{C}\},
$$
$$
Q=\{\phi\in\mathcal{C}|(q^*b_{n_0},\phi)=0~\text{and}~(\overline{q}^*b_{n_0},\phi)=0\}.
$$
$P$ is the 2-dimensional center subspace spanned by the basis vectors of the linear operator $A_0$ associated with purely imaginary eigenvalues $\pm i\omega_{n_0}\tau_*$, and $Q$ is the complement space of $P$.

Thus, system \eqref{abstract ODE} can be rewritten as
$$
U_t=z(t)q(\cdot)b_{n_0}+\bar{z}(t)\bar{q}(\cdot)b_{n_0}+W(t,\cdot),
$$
where
\begin{equation}\label{eq58}
z(t)=(q^*b_{n_0}, U_t),~~~W(t,\cdot)\in Q,
\end{equation}
and
\begin{equation}\label{eq59}
W(t,\theta)=U_t(\theta)-2\text{Re}\{z(t)q(\theta)b_{n_0}\}.
\end{equation}
Then we have
\begin{equation}\label{eq510}
\dot{z}(t)=i\omega_0\tau^*z(t)+q^{*}(0)\langle F(0, U_t), \beta_{n_0}\rangle,
\end{equation}
where
$$
\langle F, \beta_{n} \rangle:=(\langle F_1, b_{n}\rangle,\langle F_2, b_{n} \rangle)^T.
$$
It follows from Appendix A of \cite{Hassard-1981} (also see \cite{LSW-1992}), there exists a center manifold $\mathscr{C}_0$
and we can write $W$ in the following form on $\mathscr{C}_0$ nearby $(0,0)$
\begin{equation}\label{eq511}
W(t,\theta)=W(z(t),\bar{z}(t),\theta)=W_{20}(\theta)\frac{z^2}{2}+W_{11}(\theta)z\bar{z}
+W_{02}(\theta)\frac{\bar{z}^2}{2}+\cdots.
\end{equation}

For $U_t\in\mathscr{C}_0$, we denote
\begin{equation*}
F(0, U_t)\mid _{\mathscr{C}_0}=\tilde{F}(0, z, \bar{z}),
\end{equation*}
with
\begin{equation*}
\tilde{F}(0, z, \bar{z})=\tilde{F}_{20}\frac{z^2}{2}+\tilde{F}_{11}z\bar{z}+\tilde{F}_{02}\frac{\bar{z}^2}{2}
+\tilde{F}_{21}\frac{z^2\bar{z}}{2}+\cdots.
\end{equation*}
Therefore, the system restricted on the center manifold is given by
\begin{equation*}
\dot{z}(t)=i\omega_0\tau^*z(t)+g(z,\bar{z}),
\end{equation*}
where
$$g(z,\bar{z})=g_{20}\frac{z^2}{2}+g_{11}z\bar{z}+g_{02}\frac{\bar{z}^2}{2}
+g_{21}\frac{z^2\bar{z}}{2}+\cdots.$$
By direct calculations, we obtain
\begin{align*}
g_{20}=&\tau^*M\int_0^{l\pi}b_{n_0}^3\text{d}x
\left[q_2\left(2 r q_1 e^{-i\omega_{n_0}\tau^*}+\cfrac{2}{(1+m^*)^2}e^{-i\omega_{n_0}\tau^*}-\cfrac{2 m^*}{(1+m^*)^3}e^{-i2\omega_{n_0}\tau^*}\right)-2 \gamma^{-1}q_1\right], \\
g_{11}=&\tau^*M\int_0^{l\pi}b_{n_0}^3\text{d}x
\left[q_2\left(2\text{Re}\left(\big(rq_1+\cfrac{1}{(1+m^*)^2}\big)e^{-i\omega_{n_0}\tau^*}\right)-\cfrac{2 m^*}{(1+m^*)^3}\right)-\gamma^{-1}(q_1+\bar{q}_1)\right],\\
g_{02}=&\tau^*M\int_0^{l\pi}b_{n_0}^3\text{d}x
\left[q_2\left(2 r \bar{q}_1 e^{i\omega_{n_0}\tau^*}+\cfrac{2}{(1+m^*)^3}e^{i\omega_{n_0}\tau^*}-\cfrac{2 m^*}{(1+m^*)^3}e^{i2\omega_{n_0}\tau^*}\right)-2\gamma^{-1} \bar{q}_1\right], \\
g_{21}=&\tau^*M
\left(Q_1\int_0^{l\pi}b_{n_0}^4\text{d}x+Q_2\int_0^{l\pi}b_{n_0}^2\text{d}x\right), \\
\end{align*}
where
\begin{equation*}
\begin{split}
Q_1=~&q_2\left[\cfrac{6m^*}{(1+m^*)^4}e^{-i\omega_{n_0}\tau^*}-\cfrac{2}{(1+m^*)^3}\left(2+e^{-i2\omega_{n_0}\tau^*}\right)
\right], \\
Q_2=~&q_2\bigg[r\left(2W_{11}^{(2)}(-1)+W_{20}^{(2)}(-1)+\bar{q}_1
e^{i\omega_{n_0}\tau^*}W_{20}^{(1)}(0)+2 q_1e^{-i\omega_{n_0}\tau^*}W_{11}^{(1)}(0)\right)\\
&+\cfrac{1}{(1+m^*)^2}\left(2W_{11}^1(-1)+W_{20}^1(-1)
+W_{20}^1(0)e^{i\omega_{n_0}\tau^*}+2W_{11}^1(0)e^{-i\omega_{n_0}
	\tau^*}\right)\\
&-\cfrac{2m^*}{(1+m^*)^3}\left(2e^{-i\omega_{n_0}\tau^*}
W_{11}^{(1)}(-1)+e^{i\omega_{n_0}\tau^*}W_{20}^{(1)}(-1)\right)
\bigg]\\
&-\gamma^{-1}\left(2W_{11}^{(2)}(0)+W_{20}^{(2)}(0)+\bar{q}_1 W_{20}^{(1)}(0)+2q_1W_{11}^{(1)}(0)\right).\\
\end{split}
\end{equation*}
Since $g_{20}$, $g_{11}$ and $g_{02}$ are independent of $W\left(z(t),\bar{z}(t),\theta\right)$, they can be compued by Eq.\eqref{eq510}. In order to get $g_{21}$, we need to compute $W_{20}$ and $W_{11}$. From \eqref{eq59}, we have
\begin{equation}\label{eq512}
\begin{split}
\dot{W}&=\dot{U}_t-\dot{z}qb_{n_0}-\dot{\bar{z}}\bar{q}b_{n_0} \\
&=\begin{cases}
A_0 W-2\text{Re}\{g(z,\bar{z})q(\theta)\}b_{n_0},&
\theta\in[-1,0),\\
A_0 W-2\text{Re}\{g(z,\bar{z})q(\theta)\}b_{n_0}+\tilde{F},\quad
&\theta=0,
\end{cases}\\
&\doteq  A_0 W + H(z,\bar{z},\theta),
\end{split}
\end{equation}
where
\begin{equation*}
H(z,\bar{z},\theta)=H_{20}(\theta)\frac{z^2}{2}+H_{11}(\theta)z\bar{z}+H_{02}(\theta)\frac{\bar{z}^2}{2}+\cdots.
\end{equation*}
Obviously,
\begin{equation*}
\begin{split}
H_{20}(\theta)&=\begin{cases}
-g_{20}q(\theta)b_{n_0}-\bar{g}_{02}\bar{q}(\theta)b_{n_0}, &\theta \in [-1,0),\\
-g_{20}q(0)b_{n_0}-\bar{g}_{02}\bar{q}(0)b_{n_0}+\tilde{F}_{20}, \quad & \theta=0,\\
\end{cases}\\
H_{11}(\theta)&=\begin{cases}
-g_{11}q(\theta)b_{n_0}-\bar{g}_{11}\bar{q}(\theta)b_{n_0}, &\theta \in [-1,0),\\
-g_{11}q(0)b_{n_0}-\bar{g}_{11}\bar{q}(0)b_{n_0}+\tilde{F}_{11}, \quad & \theta=0.\\
\end{cases}
\end{split}
\end{equation*}
Comparing the coefficients of \eqref{eq512} with the derived function
of \eqref{eq511}, we obtain
\begin{equation}\label{w_coefficients}
(A_0 -2i\omega_0\tau^*I)W_{20}(\theta)=-H_{20}(\theta),
\quad A_0 W_{11}(\theta)=-H_{11}(\theta).
\end{equation}
From \eqref{A} and \eqref{w_coefficients}, for $\theta\in[-1,0)$, we have
\begin{equation}\label{w solution}
\begin{split}
W_{20}(\theta)&=\frac{-g_{20}}{i\omega_{n_0}\tau^*}\begin{pmatrix} 1 \\
q_1 \end{pmatrix}e^{i\omega_{n_0}\tau^*\theta}b_{n_0}-\frac{\bar{g}_{02}}{3i\omega_{n_0}\tau^*}\begin{pmatrix} 1 \\
\bar{q}_1 \end{pmatrix}e^{-i\omega_{n_0}\tau^*\theta}b_{n_0}+E_1e^{2i\omega_{n_0}\tau^*\theta},\\
W_{11}(\theta)&=\frac{g_{11}}{i\omega_{n_0}\tau^*}\begin{pmatrix} 1 \\ q_1
\end{pmatrix}e^{i\omega_{n_0}\tau^*\theta}b_{n_0}-\frac{\bar{g}_{11}}{i\omega_{n_0}\tau^*}\begin{pmatrix} 1 \\
\bar{q}_1 \end{pmatrix}e^{-i\omega_{n_0}\tau^*\theta}b_{n_0}+E_2,
\end{split}
\end{equation}
where $E_1$ and $E_2$ can be obtained by setting $\theta=0$ in $H$, that is
\begin{equation}\label{E1,E2}
(A_0 -2i\omega_{n_0}^+\tau^*I)E_1e^{2i\omega_{n_0}^+\tau^*\theta}\mid_{\theta=0}+\tilde{F}_{20}=0,
\quad A_0 E_2\mid_{\theta=0}+\tilde{F}_{11}=0.
\end{equation}
The terms $\tilde{F}_{20}$ and $\tilde{F}_{11}$ are elements in the space $\mathscr{C}$
with
\begin{equation*}
\tilde{F}_{20}=\sum_{n=1}^{\infty}\langle \tilde{F}_{20}, \beta_n \rangle b_n, \quad
\tilde{F}_{11}=\sum_{n=1}^{\infty}\langle \tilde{F}_{11}, \beta_n \rangle b_n.
\end{equation*}
Denote
$$
E_1=\sum_{n=0}^{\infty}E_1^n b_n,~~E_2=\sum_{n=0}^{\infty}E_2^n b_n.
$$
Then from \eqref{E1,E2}, we have
\begin{eqnarray*}
	\begin{array}{rr}
		(A_0-2i\omega_{n_0}\tau^*I)E_1^n b_n e^{2i\omega_{n_0}\tau^*\theta}\mid_{\theta=0}&=-\langle \tilde{F}_{20}, \beta_n \rangle b_n,\\
		A_0 E_2^n b_n\mid_{\theta=0}&=-\langle \tilde{F}_{11}, \beta_n \rangle b_n,
	\end{array}
\end{eqnarray*}
where $n=0,1,\cdots$. Thus, $E_1^n$ and $E_2^n$ could be calculated by
\begin{eqnarray*}
	\begin{array}{l}
		E_1^n=\left(2i\omega_{n_0}\tau^*I-\displaystyle{\int}_{-1}^0e^{2i\omega_{n_0}\tau^*\theta}\text{d}\eta_n(0,\theta)\right)^{-1}\langle \tilde{F}_{20}, \beta_n \rangle,\\
		E_2^n=-\left(\displaystyle{\int}^0_{-1}\text{d}\eta_n(0,\theta)\right)^{-1}\langle \tilde{F}_{11}, \beta_n \rangle, \\
	\end{array}
\end{eqnarray*}
where
$$\langle \tilde{F}_{20}, \beta_n \rangle=\begin{cases}\frac{1}{\sqrt{l\pi}}\hat{F}_{20}, &n_0\neq0,~n=0,\\
\frac{1}{\sqrt{2l\pi}}\hat{F}_{20}, &n_0\neq0,~n=2n_0,\\
\frac{1}{\sqrt{l\pi}}\hat{F}_{20}, &n_0=0,~n=0,\\
0, &other,\end{cases},~
\langle \tilde{F}_{11}, \beta_n \rangle=\begin{cases}\frac{1}{\sqrt{l\pi}}\hat{F}_{11}, &n_0\neq0,~n=0,\\
\frac{1}{\sqrt{2l\pi}}\hat{F}_{11}, &n_0\neq0,~n=2n_0,\\
\frac{1}{\sqrt{l\pi}}\hat{F}_{11}, &n_0=0,~n=0,\\
0, &other,\end{cases}$$
with
\begin{eqnarray*}
	\begin{array}{l}
		\hat{F}_{20}=\left(\begin{array}{c} 2 r q_1 e^{-i\omega_{n_0}\tau^*}+\cfrac{2}{(1+m^*)^2}e^{-i\omega_{n_0}\tau^*}-\cfrac{2 m^*}{(1+m^*)^3}e^{-i2\omega_{n_0}\tau^*}\\
			-2 \gamma^{-1}q_1\end{array}\right),\\
		\hat{F}_{11}=\left(\begin{array}{c} r(q_1e^{-i\omega_{n_0}\tau^*}+\bar{q}_1e^{i\omega_{n_0}\tau^*})+\cfrac{2}{(1+m^*)^3}\\
			-\gamma^{-1}( q_1+\bar{q}_1)\end{array}~\right).
	\end{array}
\end{eqnarray*}
Hence, $g_{21}$ could be represented explicitly.

Denote
\begin{equation}\label{c1}
\begin{split}
&c_1(0)=\frac{i}{2\omega_{n_0}\tau^*}(g_{20}g_{11}-2|g_{11}|^2-\frac{1}{3}|g_{02}|^2)+\frac{1}{2}g_{21},\\
&\mu_2=-\frac{\text{Re}(c_1(0))}{\tau^*\text{Re}(\lambda'(\tau^*))},~~~
\beta_2=2\text{Re}(c_1(0)),\\
&T_2=-\frac{1}{\omega_{n_0}\tau^*}(\text{Im}(c_1(0))+\mu_2(\omega_{n_0}+\tau^*\text{Im}(\lambda'(\tau^*))).
\end{split}
\end{equation}

By the general results of Hopf bifurcation theory \cite{Hassard-1981}, the properties of Hopf bifurcation can be determined by the parameters in \eqref{c1}.
$\beta_2$ determines the stability of the bifurcating
periodic solutions: the bifurcating periodic solutions are orbitally asymptotically
stable(unstable) if $\beta_2<0(>0)$;
$\mu_2$ determines the direction of the Hopf
bifurcation: if $\mu_2>0(<0)$, the direction of the Hopf bifurcation is forward (backward), that is the bifurcating
periodic solutions exist when $\tau>\tau^*(<\tau^*)$;
$T_2$ determines the period of the bifurcating periodic solutions: when $T_2>0(<0)$, the
period increases(decreases) as the $\tau$ varies away from $\tau^*$.

From Lemma \ref{trans_tau} in Section \ref{stability}, we know that $\text{Re}(\lambda'(\tau^*))>0$. Combining with above discussion, we obtain the following theorem.
\begin{theorem}\label{Hopf_direction}
	If $\text{Re}(c_1(0))<0(>0)$, then the bifurcating periodic solutions exists for $\tau>\tau^*(<\tau^*)$ and are orbitally
	asymptotically stable(unstable).
\end{theorem}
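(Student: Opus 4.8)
The plan is to read this statement off directly from the normal-form data already assembled in \eqref{c1}, together with the transversality established in Lemma \ref{trans_tau} and the standard classification of Hopf bifurcations in \cite{Hassard-1981}. The heavy lifting --- computing $g_{20},g_{11},g_{02},g_{21}$, and hence $c_1(0)$, by way of the center-manifold coefficients $W_{20},W_{11},E_1,E_2$ --- has already been carried out in the preceding computation, so what remains is purely a sign analysis of the derived quantities $\mu_2$ and $\beta_2$. First I would recall the interpretations stated just above the theorem: $\beta_2=2\,\text{Re}(c_1(0))$ fixes the stability of the bifurcating periodic orbits (orbitally asymptotically stable when $\beta_2<0$, unstable when $\beta_2>0$), while $\mu_2=-\text{Re}(c_1(0))/(\tau^*\,\text{Re}(\lambda'(\tau^*)))$ fixes the direction (forward, i.e.\ periodic solutions for $\tau>\tau^*$, when $\mu_2>0$; backward, i.e.\ periodic solutions for $\tau<\tau^*$, when $\mu_2<0$).

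Next I would pin down the sign of the denominator in $\mu_2$. Since $\tau^*>0$ and $\tau^*=\min_{n\in S_0}\{\tau_{n,0}\}$ is itself one of the critical values $\tau_{n,j}$, Lemma \ref{trans_tau} gives $\text{Re}(\lambda'(\tau^*))=\beta'(\tau^*)>0$ (this is exactly the fact recorded in the paragraph following Theorem \ref{Hopf_stability}). Consequently $\tau^*\,\text{Re}(\lambda'(\tau^*))>0$, so the sign of $\mu_2$ is opposite to that of $\text{Re}(c_1(0))$, whereas the sign of $\beta_2$ coincides with that of $\text{Re}(c_1(0))$. This single observation links the scalar $\text{Re}(c_1(0))$ simultaneously to direction and to stability.

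Finally a two-case split closes the argument. If $\text{Re}(c_1(0))<0$, then $\beta_2<0$ (orbits orbitally asymptotically stable) and $\mu_2>0$ (forward bifurcation, so the orbits appear for $\tau>\tau^*$); if $\text{Re}(c_1(0))>0$, then $\beta_2>0$ (orbits unstable) and $\mu_2<0$ (backward bifurcation, so the orbits appear for $\tau<\tau^*$). In both cases the two conclusions read off exactly as claimed. The only genuine difficulty in the whole argument lies upstream: correctly evaluating $g_{21}$, which through the term $Q_2$ requires the explicit $W_{20},W_{11}$ and the projections $E_1^n,E_2^n$ given by \eqref{w solution} and \eqref{E1,E2}. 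Once $\text{Re}(c_1(0))$ is in hand, the theorem itself is immediate, being a transparent corollary of the sign of this one real number together with the positivity of the transversal crossing.
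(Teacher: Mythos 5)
Your argument is correct and is exactly the paper's own (implicit) proof: the paper likewise combines the sign conventions for $\beta_2$ and $\mu_2$ stated after \eqref{c1} with the fact $\text{Re}(\lambda'(\tau^*))>0$ from Lemma \ref{trans_tau} to read off the theorem. No discrepancy to report.
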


\section{Simulations}\label{simulations}

In this section, we shall show some simulations to illustrate our theoretical results. Let $l=1$, and choose
$$
\gamma=0.5,~~~d=1.0,~~~\alpha=0.10,~~~r=2.
$$
Since $0<\alpha<1<r<\alpha^{-1}$, then $E_*(m^*, a^*)$ is the only positive equilibrium with $(m^*, a^*)=(0.1250,0.4444)$. One can easily verify
that \textsc{(H1)} $\sim$ \textsc{(H3)} are satisfied. By a simple calculation, we also obtain that Eq.\eqref{omega_2} has a positive root only for $n=0$, and
$$
\omega_0\approx 0.3253,~~~~   \tau^*\approx 2.3545.
$$

Furthermore, we have $c_1(0)\approx-2.28261-23.9865i$, which means $\beta_2<0$, $\mu_2>0$.
From Theorem \ref{Hopf_stability} and \ref{Hopf_direction}, the
positive equilibrium $E_*(0.1250,0.4444)$ is locally asymptotically stable when $\tau\in[0, \tau^*)$ (see Fig.\ref{fig1}), moreover, system \eqref{eq_ma_tau} undergoes a Hopf bifurcation at $\tau=\tau^*$, the direction of the Hopf bifurcation is forward and bifurcating periodic solutions are orbitally asymptotically stable (see Fig.\ref{fig2}).

If we choose
$$
\gamma=0.5,~~~d=1.0,~~~\alpha=0.10,~~~r=0.5, ~~~\tau=2.
$$
Here $r=0.5\in(0,1)$, from Remark \ref{E_0}, we know that the boundary equilibrium
$E_0(0,1)$ is locally asymptotically stable (see Fig.\ref{fig3}).

Fig.\ref{fig1} $\sim$ Fig.\ref{fig3} show the dynamics of system \eqref{eq_ma_tau} near the positive constant steady state.
Fig.\ref{fig1} shows a stable positive constant steady state when $\tau<\tau^*$ and $r>1$, which represents coexistence of both species (mussel and algae) biologically.
This stability will be broken when $\tau$ increases and passes through the critical value $\tau^*$, which is accompanied by a spatially homogeneous periodic solution corresponds to a periodic oscillation in populations of mussel and algae, see Fig.\ref{fig2}. This periodicity is common in predator-prey systems \cite{CSW-2013, Shen-2018}. Fig.\ref{fig3} shows
the prey-only homogeneous steady state under the condition $0<r<1$, which corresponds to bare sediment with no mussel biomass.The initial conditions are given by $m_0(x,t)=m^*+0.1\cos2x$, $a_0(x,t)=a^*-0.1\cos2x$, $(x,t)\in[0,\pi]\times[-\tau,0]$.

Our results suggest that the positive constant steady state will lose its stability when $\tau$ passes through some critical values, and there will be periodic oscillations in populations of species. We have tried a large number of sets of parameters, but we did not find any set that would allow a nonhomogeneous periodic solution bifurcating from the steady state under the assumption \textsc{(H1)}$\sim$\textsc{(H3)}.
Biologically, for mussels and algae species living at the same depth, if the digestion period $\tau$ is greater than the critical value $\tau^*$, the population will have a periodic oscillation over time with their spatial distribution is uniform.

\begin{figure}[htp]
\begin{multicols}{2}
\begin{center}
\subfigure{\includegraphics[width=3in]{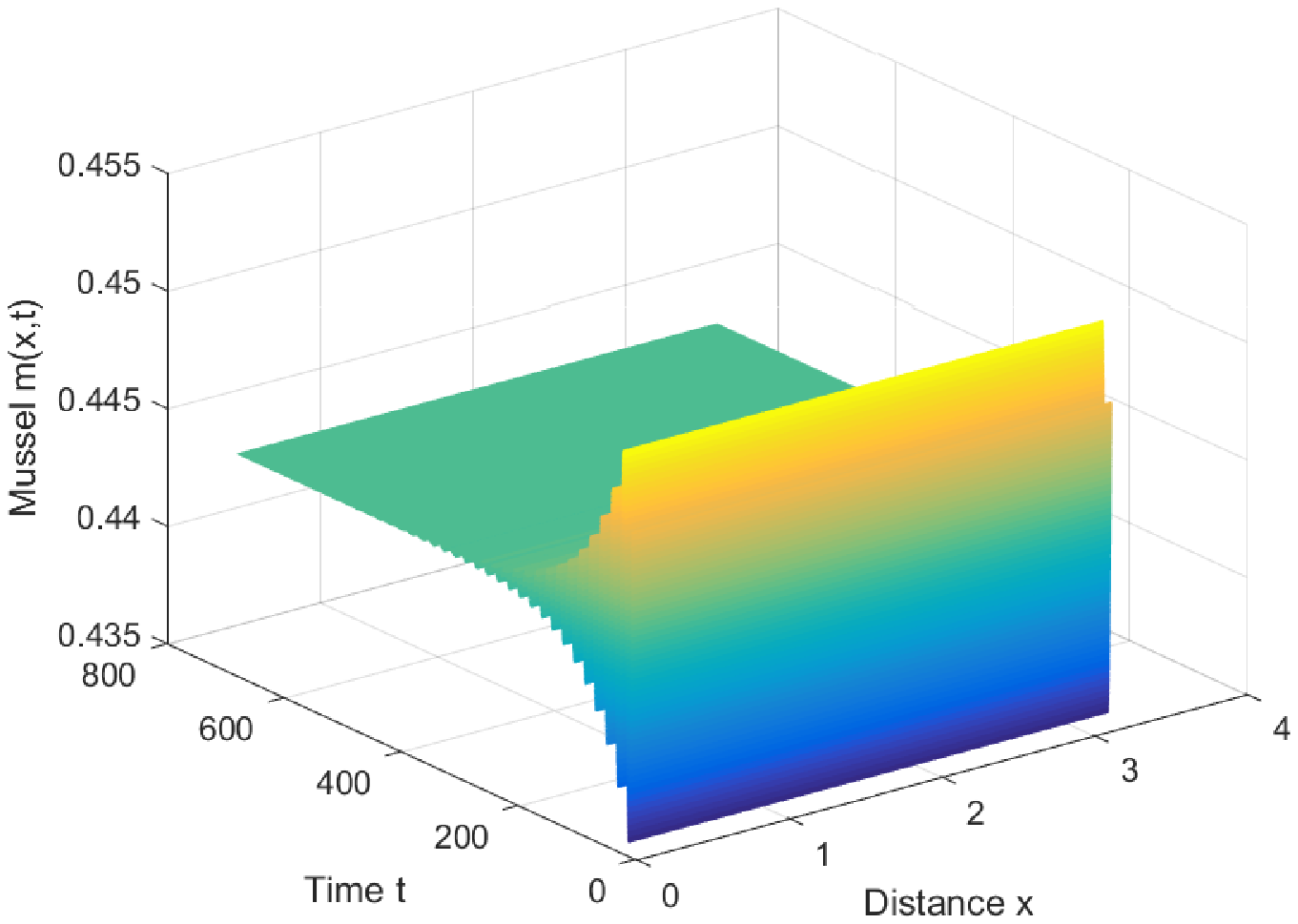}}\vspace{-0.5cm}

\subfigure{\includegraphics[width=3in]{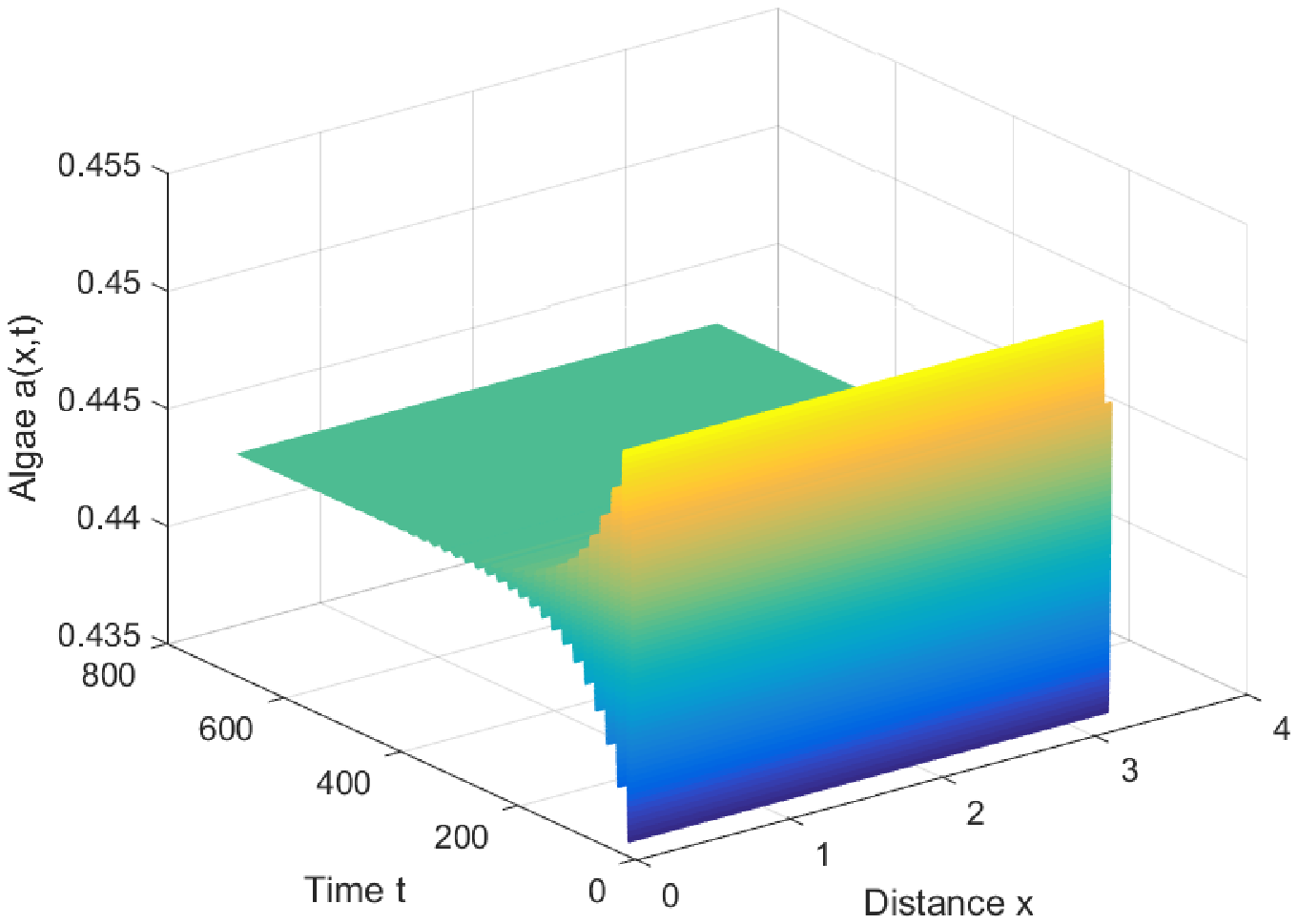}}
\end{center}
\begin{center}
\subfigure{\includegraphics[width=3.5in,height=2.5in]{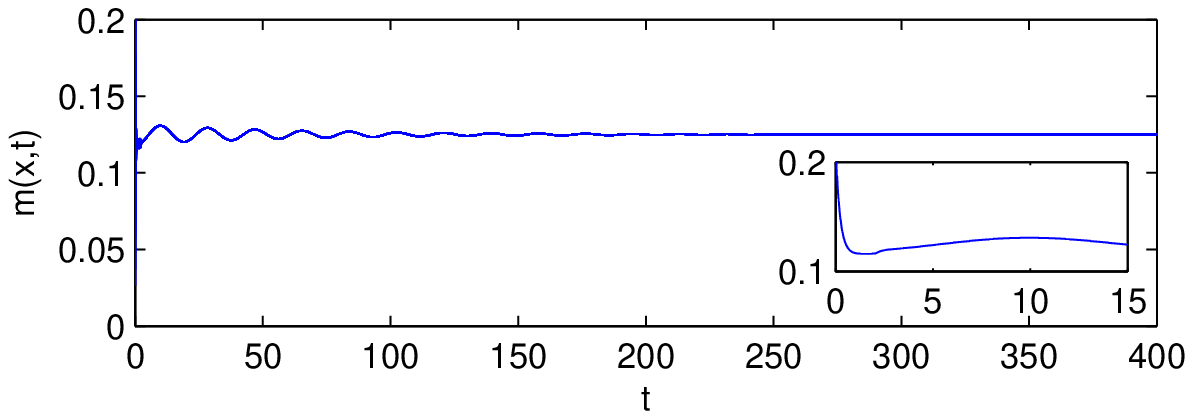}}\vspace{-3cm}
\subfigure{\includegraphics[width=3.5in,height=2.5in]{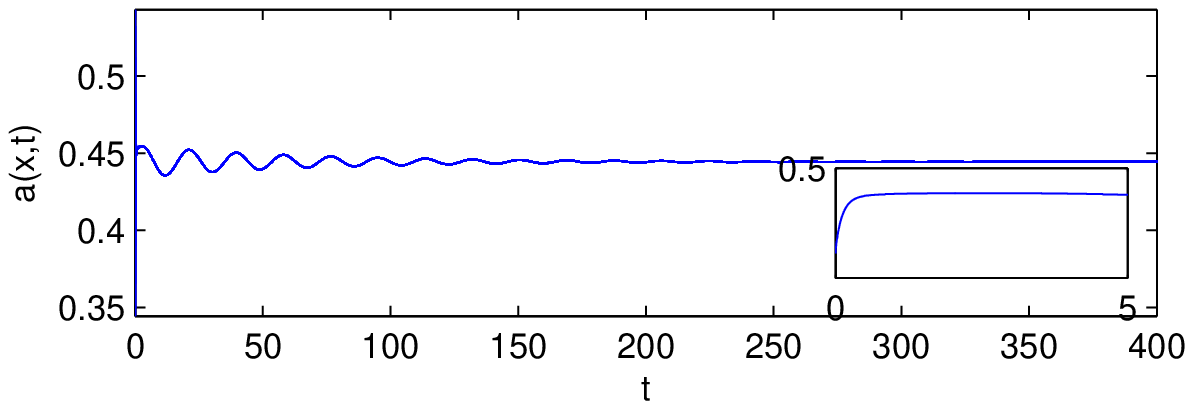}}\vspace{-3cm}
\subfigure{\includegraphics[width=3.5in,height=2.5in]{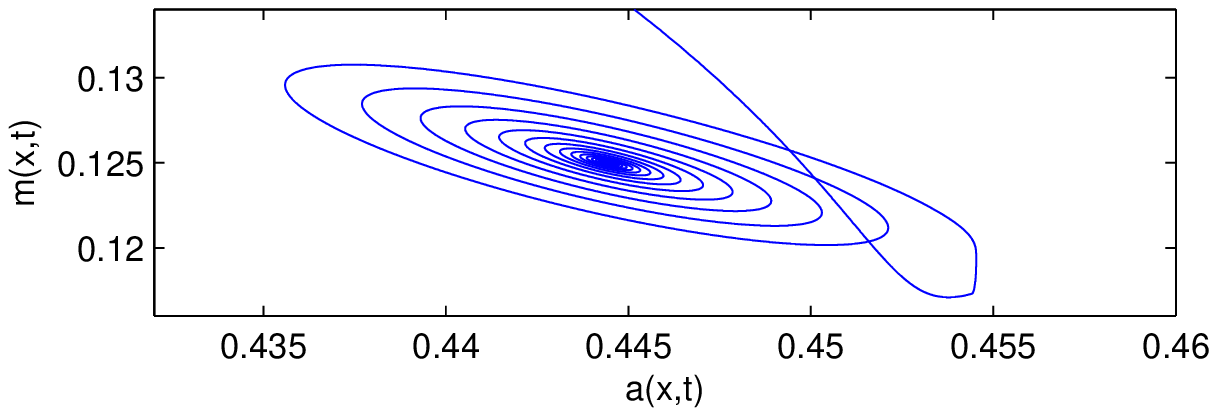}}
\end{center}
\end{multicols}
\vspace{-2cm}
 \caption{The positive equilibrium is locally asymptotically stable when $\tau\in[0, \tau^*)$, where $\tau=2<\tau^*\approx2.3545$.}\label{fig1}
\end{figure}

\begin{figure}[htp!]
\begin{multicols}{2}
\begin{center}
\subfigure{\includegraphics[width=3in]{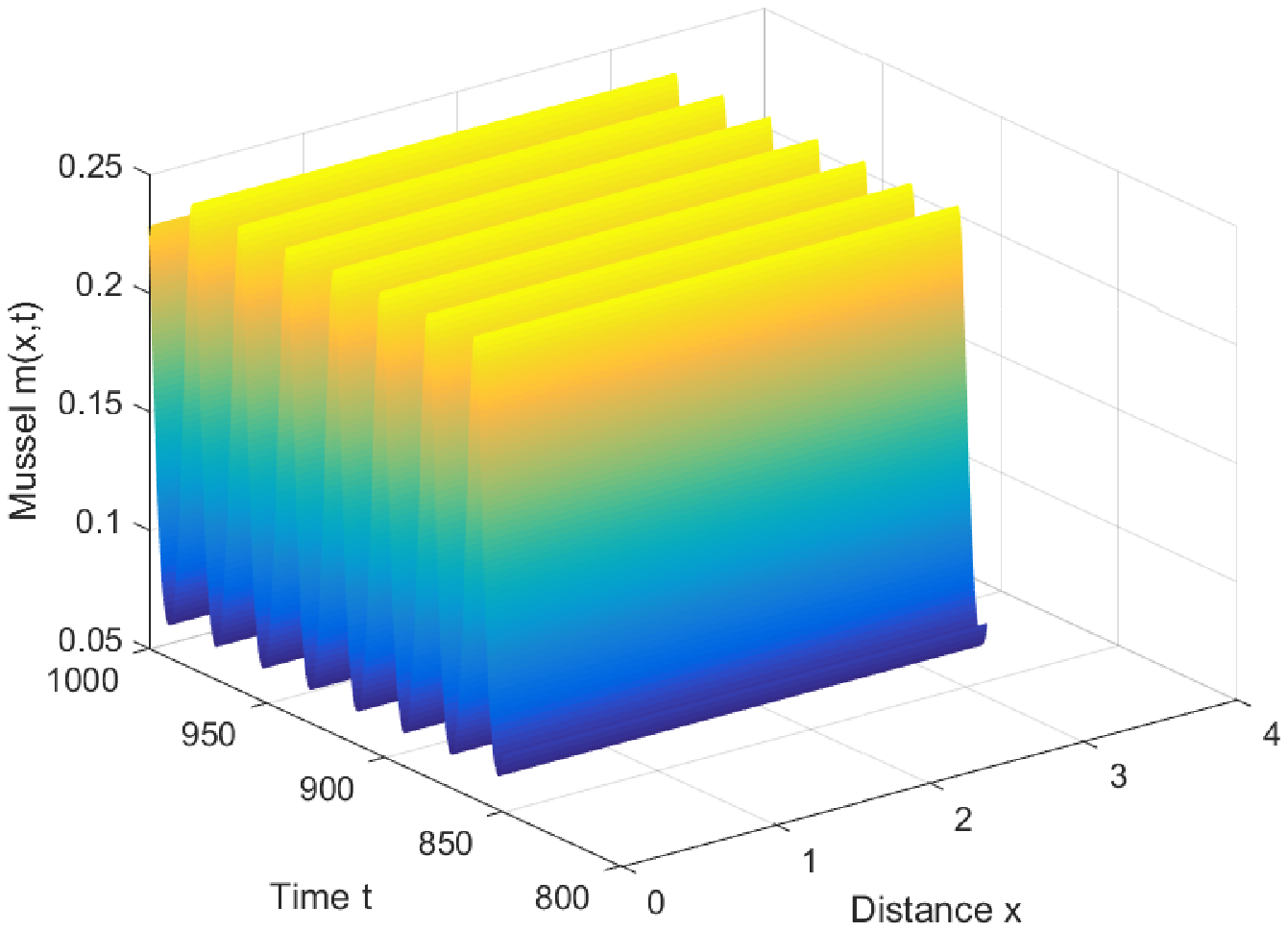}}\vspace{-0.5cm}
\subfigure{\includegraphics[width=3in]{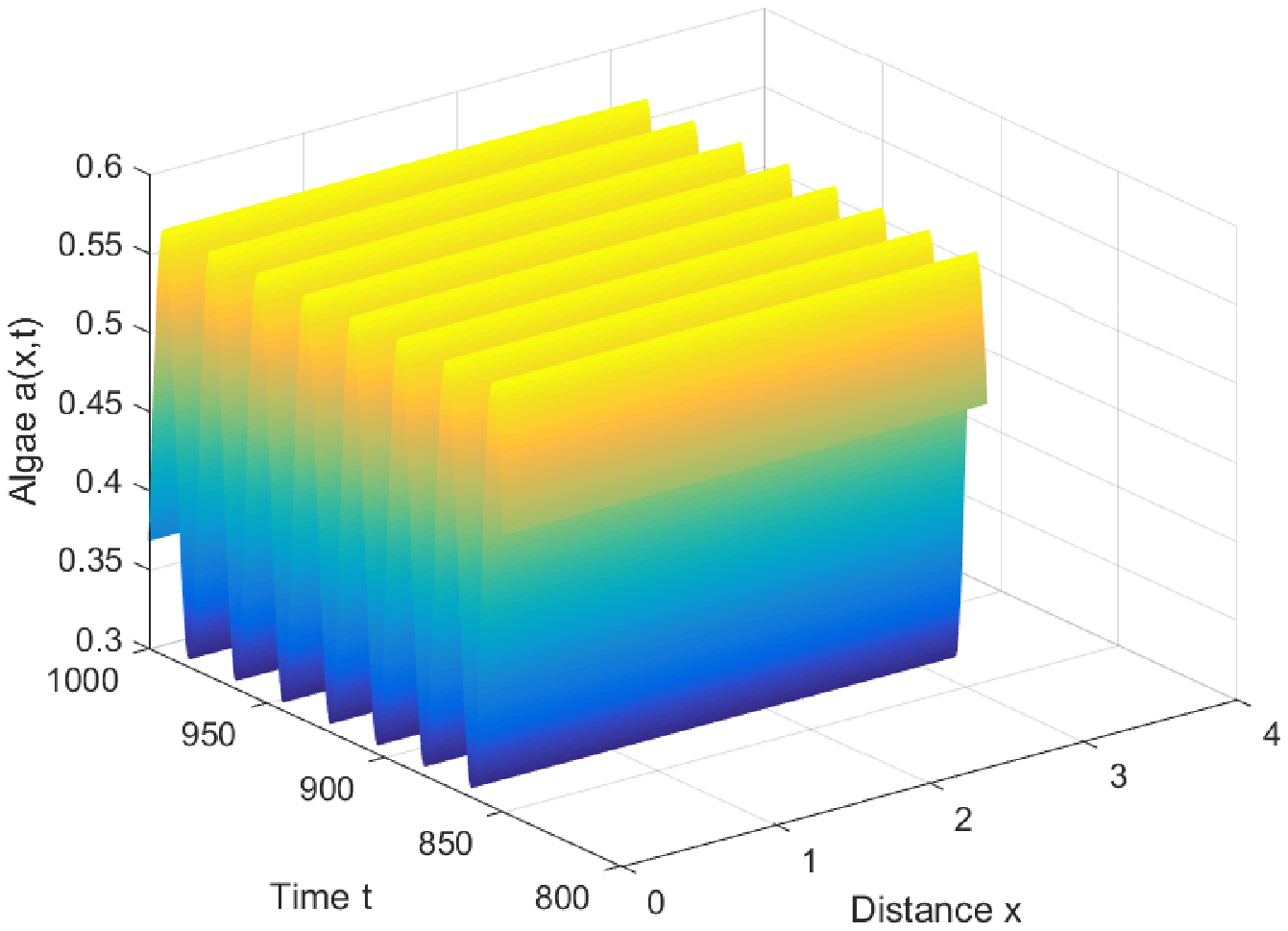}}
\end{center}
\begin{center}
\subfigure{\includegraphics[width=3.4in,height=2.5in]{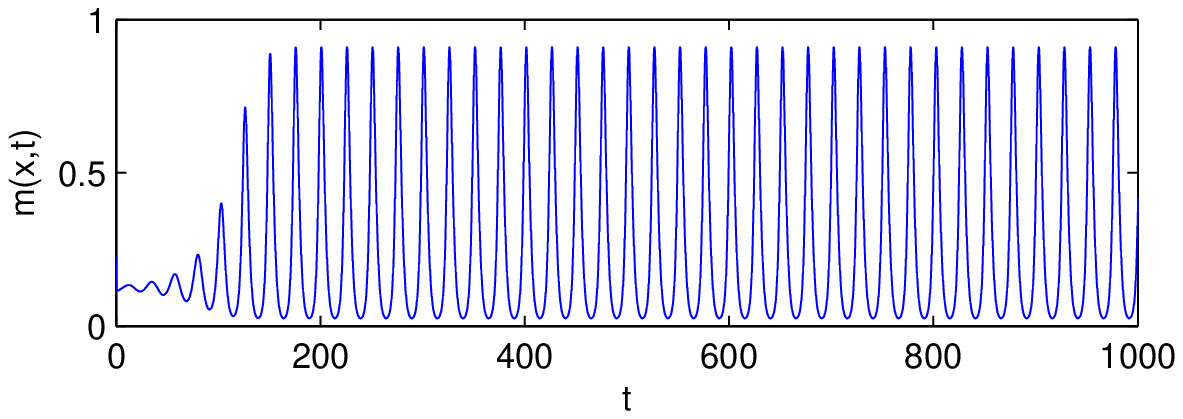}}\vspace{-3cm}
\subfigure{\includegraphics[width=3.4in,height=2.5in]{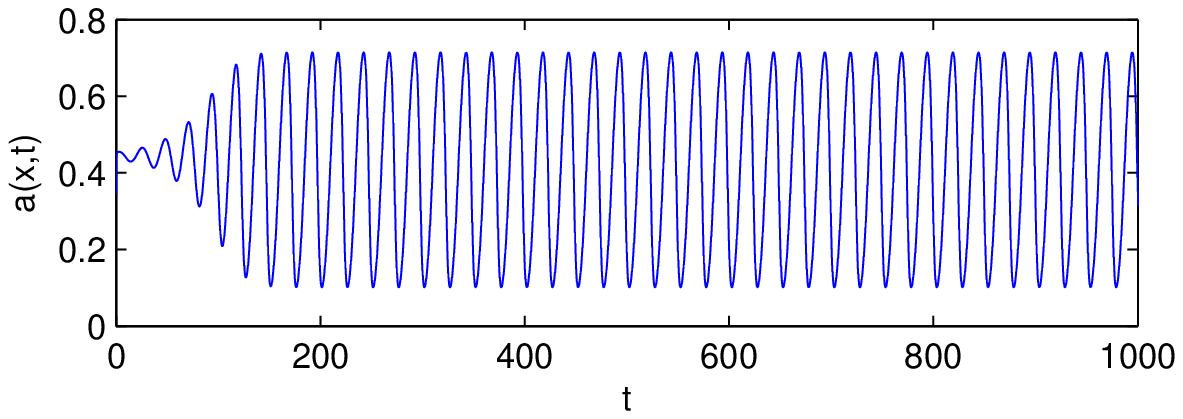}}\vspace{-3cm}
\subfigure{\includegraphics[width=3.4in,height=2.5in]{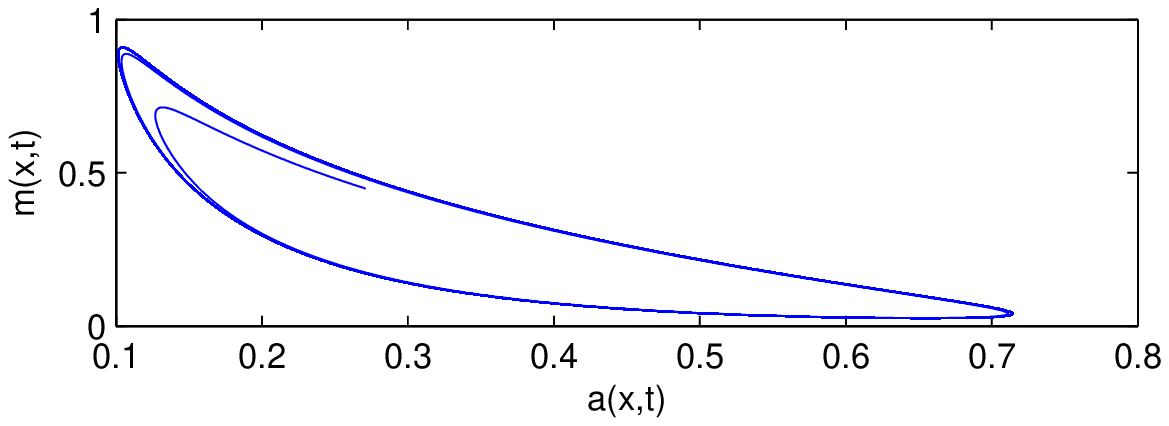}}
\end{center}
\end{multicols}
\vspace{-2cm}
 \caption{The bifurcating periodic solution is orbitally asymptotically stable, where $\tau=3.6>\tau^*\approx2.3545$.}\label{fig2}
\end{figure}

\begin{figure}[htp]
\centering
\begin{multicols}{2}
\begin{center}
\subfigure{\includegraphics[width=3in]{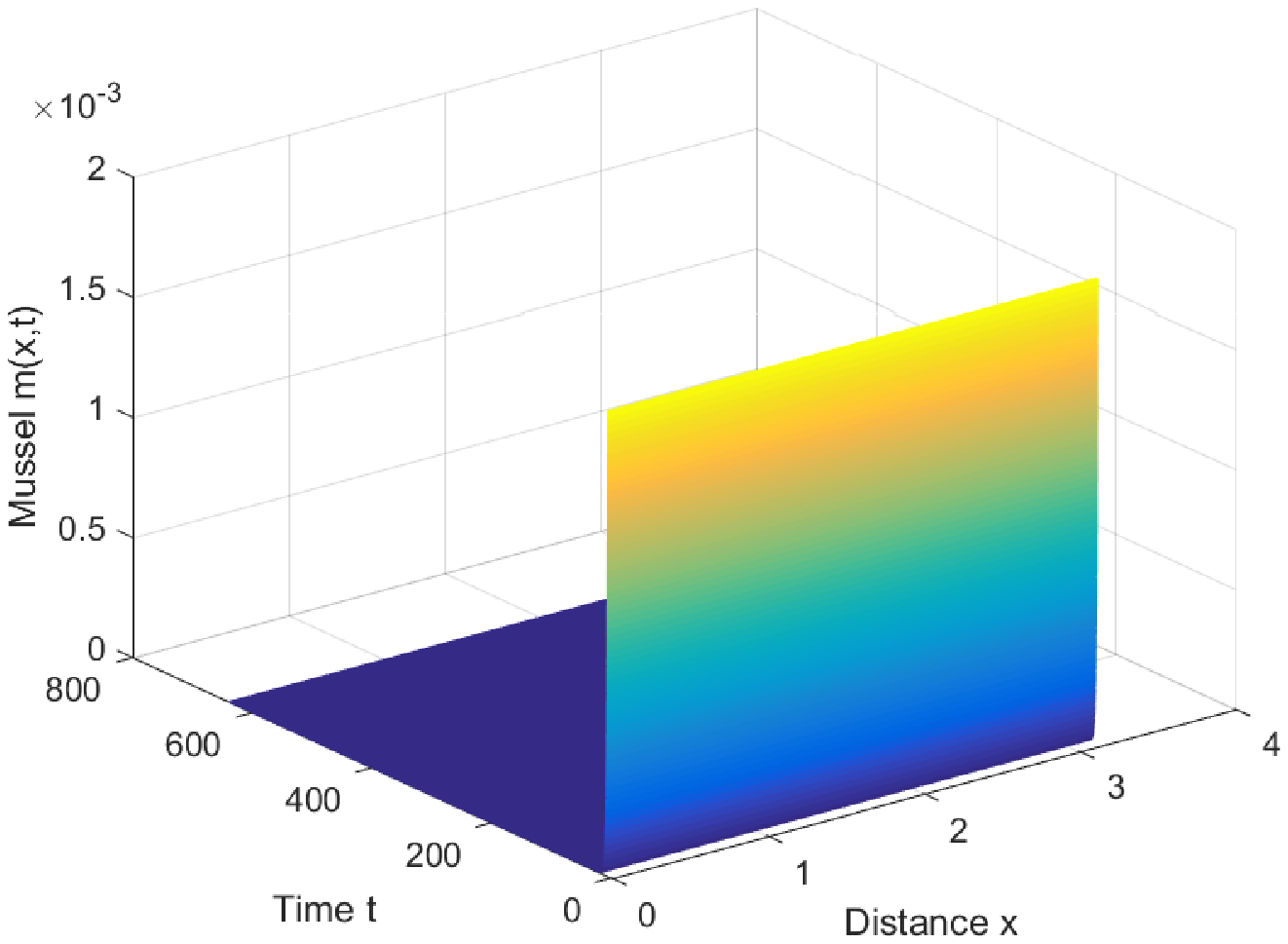}}\vspace{-0.5cm}
\subfigure{\includegraphics[width=3in]{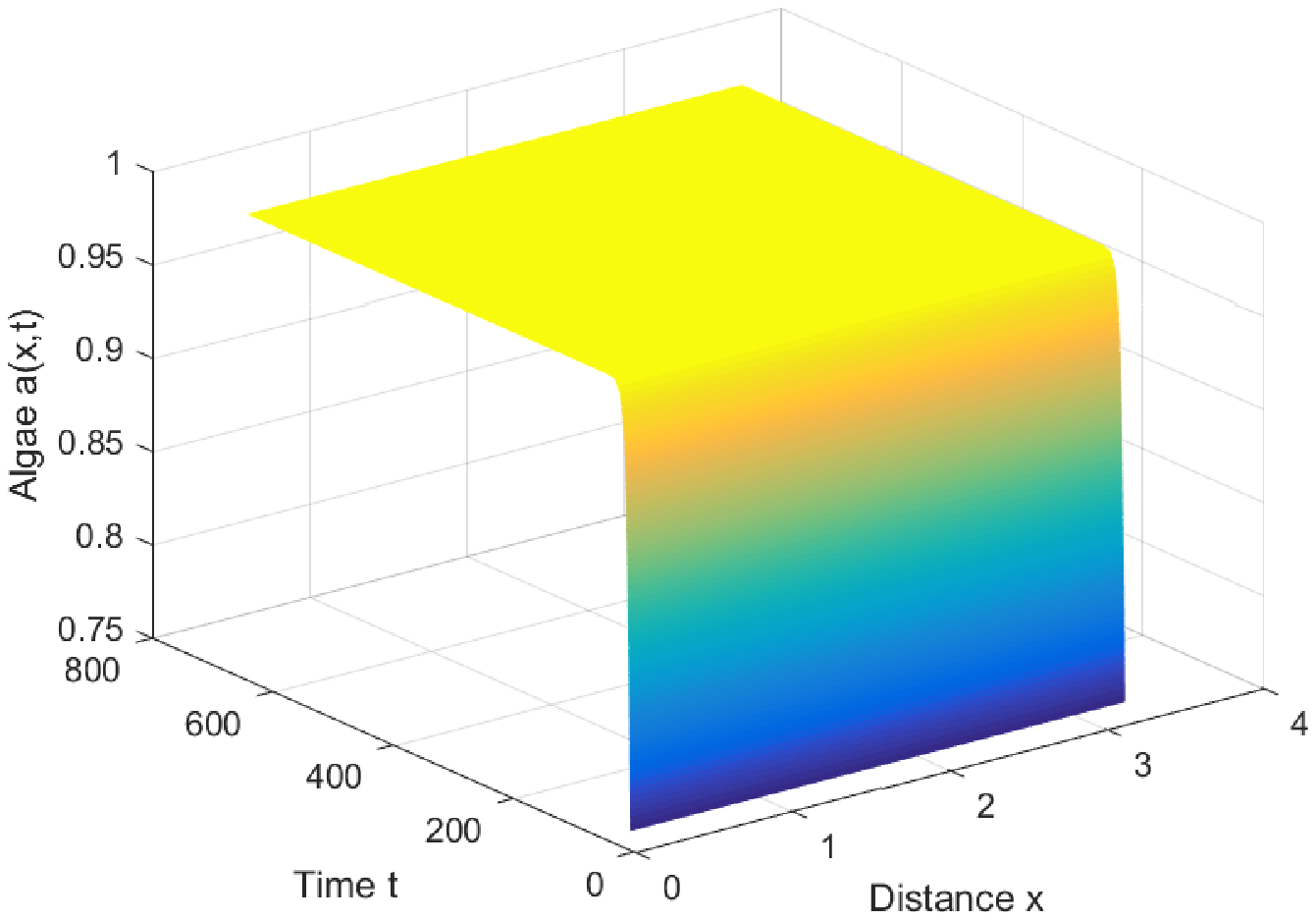}}
\end{center}

\begin{center}
\subfigure{\includegraphics[width=3.4in,height=2.5in]{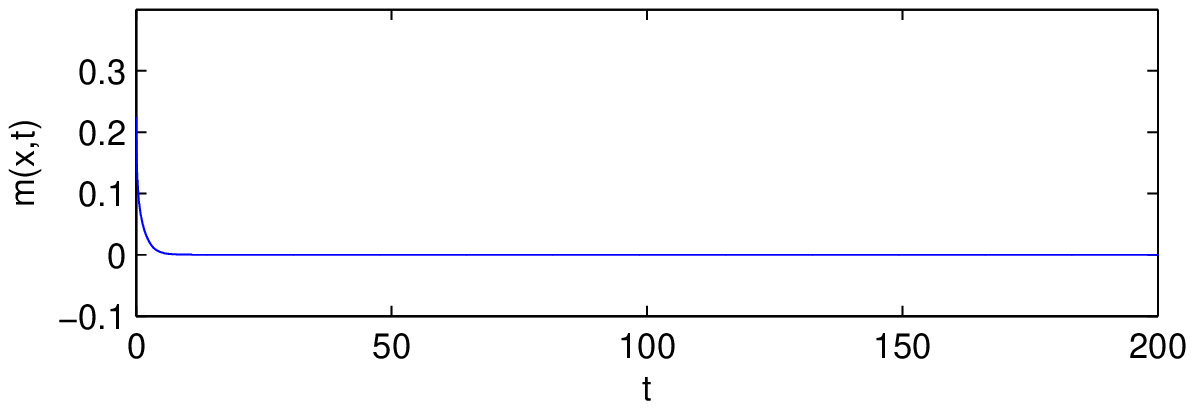}}\vspace{-3cm}
\subfigure{\includegraphics[width=3.4in,height=2.5in]{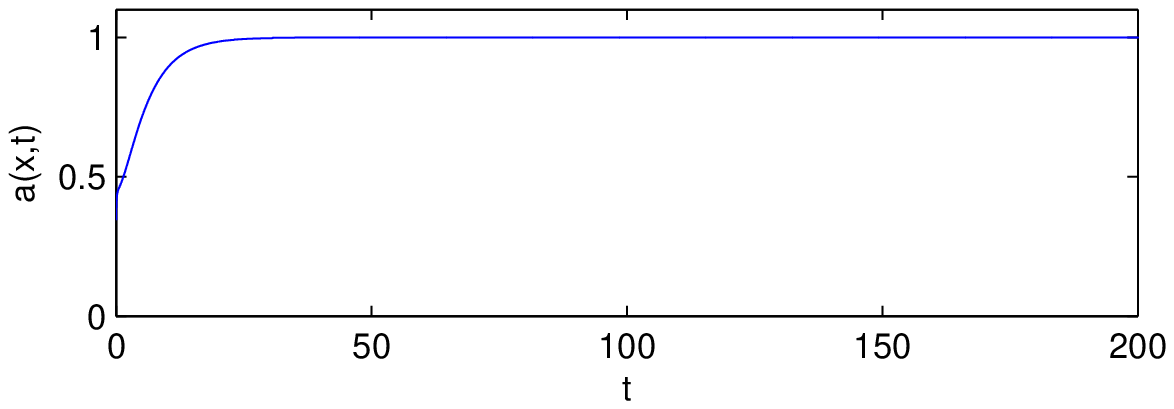}}\vspace{-3cm}
\subfigure{\includegraphics[width=3.4in,height=2.5in]{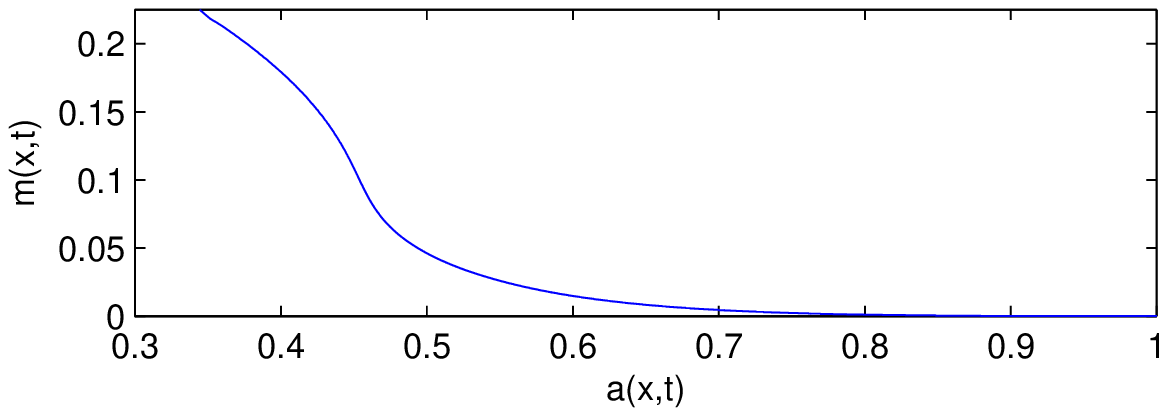}}
\end{center}
 \end{multicols}
 \vspace{-2cm}
  \caption{The axial equilibrium $E_1(0,1)$ is locally asymptotically stable.}\label{fig3}
\end{figure}

{\bf Acknowledgements} The authors are grateful to the anonymous referees for their
helpful comments and valuable suggestions which have improved the presentation of the paper.

\newpage

\end{document}